\newtheorem{theorem}{Theorem}[section]
\newtheorem{conjecture}[theorem]{Conjecture}
\newtheorem{lemma}[theorem]{Lemma}
\newtheorem{proposition}[theorem]{Proposition}
\newtheorem{remark}[theorem]{Remark}
\newcommand{\vanish}[1]{}\parskip=12pt
\newcommand{\0}{\widehat{0}}
\newcommand{\1}{\widehat{1}}
\newcommand{\cupdot}{\mathbin{\mathaccent\cdot\cup}}
\def\p{\prime}
\def\pp{{\prime\prime}}
\def\R{\mathbf{R}}
\def\Z{\mathbb{Z}}
\def\Q{\mathbb{Q}}
\def\K{\mathcal{K}}
\def\L{\mathcal{L}}
\def\O{\Omega}
\def\M{\mathcal{M}}
\def\x{{\bf{x}}}
\def\y{{\bf{y}}}
\def\z{{\bf{z}}}
\def\t{{\bf{t}}}
\def\s{{\bf{s}}}
\def\k{{\bf{k}}}
\def\l{{\bf{l}}}
\def\f{{\bf{f}}}
\def\0{{\bf{0}}}
\def\C{{\mathbb{C}}}
\def\F{\mathcal{F}}
\numberwithin{equation}{section}
\begin{document}
\title{Gabor Functional Multiplier in the Higher Dimensions}
\author{Zhongyan Li $^\sharp$ and Yuanan Diao$^\dagger$ }
\address{$^\sharp$ School of Mathematics and Physics\\
North China Electric Power University, Beijing, 102206, China}
\email{lzhongy@ncepu.edu.cn}
\address{$^\dagger$ Department of Mathematics and Statistics\\
University of North Carolina Charlotte\\
Charlotte, NC 28223}
\email{ydiao@uncc.edu}
\thanks{$^\sharp$ is supported by the
National Natural Science Foundation of China (Grant No. 11571107)}
\subjclass[2010]{Primary:42C15, 46C05, 47B10.}
\keywords{Gabor family, Garbor frame generators, Functional multipliers.}
\begin{abstract}
For two given full-rank lattices  $\L=A\Z^d$ and $\K=B\Z^d$ in $\R^d$, where $A$ and $B$ are nonsingular real $d\times d$ matrices, a function $g(\t)\in L^2(\R^d)$ is called a Parseval Gabor frame generator if
$\sum_{\l,\k\in\Z^d}|\langle f, {e^{2\pi i\langle B\k,\t\rangle}}g(\t-A\l)\rangle|^2=\|f\|^2$
holds for any $f(\t)\in L^2(\R^d)$. It is known that Parseval Gabor frame generators exist if and only if $|\det(AB)|\le 1$. A function $h\in L^{\infty}(\R^d)$ is called a functional Gabor frame multiplier if it has the property that $hg$ is a Parseval Gabor frame generator for $L^2(\R^d)$
whenever $g$ is. It is conjectured that an if and only if condition for a function $h\in L^{\infty}(\R^d)$ to be a functional Gabor frame multiplier is that $h$ must be unimodular and $h(\x)\overline{h(\x-(B^T)^{-1}\k)}=h(\x-A\l)\overline{h(\x-A\l-(B^T)^{-1}\k)},\ \forall\ \x\in \R^d$ {\em a.e.} for any $\l,\k\in \Z^d$, $\k\not=\0$. The if part of this conjecture is true and can be proven easily, however the only if part of the conjecture has only been proven in the one dimensional case to this date. In this paper we prove that the only if part of the conjecture holds in the two dimensional case.
\end{abstract}

\maketitle

\section{Introduction}

The characterization of the various frame generator multipliers is a topic that was initially motivated by Dai and Larson on wandering vector multipliers in \cite{DL} and by the WUTAM paper on basic properties of wavelets \cite{WC}. One important application of wavelet multipliers the construction of new wavelets from existing ones. In the one-dimensional as well as in the high-dimensional cases,  wavelet multipliers or Gabor frame wavelet multipliers and multipliers for group frames have been studied extensively and characterized completely. For a representative list of publications on this subject see \cite{GH2003,Han-TAMS,LDDX,LHan, LHan-LAA, WC}.

\medskip
Given two full-rank lattices  $\L=A\Z^d$ and $\K=B\Z^d$ in $\R^d$, where $A$ and $B$ are nonsingular real $d\times d$ matrices. The Gabor family generated by  a function
$g(\t)\in L^2(\R^d)$ is given by
$$G(\L,\K,g)=\{e^{2\pi i\langle B\k,\t\rangle}g(\t-A\l):\,\l,\k\in\Z^d\}.$$
We say that $g(\t)$  is a Gabor frame  generator for $L^{2}(\R^d)$ if $(g, \L,\K)$ is a frame, {\em i.e.},  there exist constants $a_1$, $a_2$ such that $0<a_1\le a_2$ and
$$a_1\|f\|^2\leq \sum_{\l,\k\in\Z^d}|\langle f, {e^{2\pi i\langle B\k,\t\rangle}}g(\t-A\l)\rangle|^2\leq a_2\|f\|^2$$
for any $f(\t)\in L^2(\R^d).$ $g(\t)$ is called a {\em Parseval (or normalized tight) Gabor frame generator} if $a_1=a_2=1$. Czaja provided a complete characterization of a normalized tight Gabor frame function $g$ in \cite{Czaja2000} as stated in the following proposition.

\begin{proposition}\label{prop1}\cite{Czaja2000}
$g$ is a normalized tight Gabor frame function (with $\L=A\Z^d$ and $\K=B\Z^d$) if and only if it satisfies the following two conditions for any $\x\in \R^d$ {\em a.e.} and $\k\not=\0\in \Z^d$:
\begin{eqnarray}
\sum_{\l\in\Z^d}|g(\x-A\l)|^2&=&b, \label{cond1}\\
\sum_{\l\in\Z^d}g(\x-A\l)\overline{g(\x-A\l-(B^T)^{-1}\k)}&=&0,\label{cond2}
\end{eqnarray}
where $b=|\det(B)|$.
\end{proposition}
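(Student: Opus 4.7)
The plan is to compute the Gabor sum $S(f):=\sum_{\l,\k\in\Z^d}|\langle f, e^{2\pi i\langle B\k,\t\rangle}g(\t-A\l)\rangle|^2$ in closed form using Plancherel's identity on the torus $\R^d/(B^T)^{-1}\Z^d$ dual to the modulation lattice, and then to read off the two required conditions by imposing $S(f)=\|f\|^2$ for every $f\in L^2(\R^d)$.

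First, fix $\l\in\Z^d$ and set $h_\l(\t):=f(\t)\overline{g(\t-A\l)}$; then
$$\langle f,e^{2\pi i\langle B\k,\t\rangle}g(\t-A\l)\rangle=\int_{\R^d}h_\l(\t)e^{-2\pi i\langle B\k,\t\rangle}d\t$$
is, up to a factor of $1/b$, the $\k$th Fourier coefficient of the $(B^T)^{-1}\Z^d$-periodization $Ph_\l(\t):=\sum_{\m\in\Z^d}h_\l(\t+(B^T)^{-1}\m)$ on a fundamental domain $Q$ of volume $1/b$. Restricting to the dense subclass of bounded, compactly supported $f$, Plancherel on $Q$ followed by unfolding gives
$$\sum_{\k\in\Z^d}|\langle f,e^{2\pi i\langle B\k,\t\rangle}g(\t-A\l)\rangle|^2=\frac{1}{b}\sum_{\k\in\Z^d}\int_{\R^d}h_\l(\t)\overline{h_\l(\t-(B^T)^{-1}\k)}d\t.$$
Substituting back the definition of $h_\l$, summing over $\l$, and separating the $\k=\0$ term yields the closed-form identity
$$S(f)=\frac{1}{b}\int_{\R^d}|f(\t)|^2\Phi_0(\t)d\t+\frac{1}{b}\sum_{\k\neq\0}\int_{\R^d}f(\t)\overline{f(\t-(B^T)^{-1}\k)}\Phi_\k(\t)d\t,$$
where $\Phi_\k(\t):=\sum_{\l\in\Z^d}\overline{g(\t-A\l)}g(\t-A\l-(B^T)^{-1}\k)$, whose complex conjugate is exactly the left-hand side of (\ref{cond2}). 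The \emph{if} direction is then immediate: under (\ref{cond1}) and (\ref{cond2}) we have $\Phi_0\equiv b$ and $\Phi_\k\equiv 0$ for $\k\neq\0$, so $S(f)=\|f\|^2$ on the dense subclass and hence, by density, on all of $L^2(\R^d)$.

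For the \emph{only if} direction, I would apply the identity above to carefully chosen test functions. Taking $f=\chi_E$ with $E$ measurable and of diameter small enough that $E\cap(E+(B^T)^{-1}\k)=\emptyset$ for every $\k\neq\0$ kills every $\k\neq\0$ integral, leaving $\int_E\Phi_0(\t)d\t=b|E|$; since $E$ is arbitrary among such small sets, this forces $\Phi_0=b$ a.e., i.e., (\ref{cond1}). To obtain (\ref{cond2}), for each fixed $\k_0\neq\0$ I would apply the identity to $f=\chi_E+\alpha\chi_{E+(B^T)^{-1}\k_0}$ with $E$ still of small diameter and $\alpha\in\C$; only the $\k=\pm\k_0$ cross terms survive (after subtracting the already-known $\Phi_0$ contribution), and taking $\alpha=1$ and $\alpha=i$ separately forces $\int_{E+(B^T)^{-1}\k_0}\Phi_{\k_0}(\t)d\t=0$ for every such $E$, yielding $\Phi_{\k_0}\equiv 0$ a.e.

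The main obstacle is purely technical: justifying the periodization, Plancherel on the torus, and the interchanges of double sums and integrals under nothing more than the hypothesis $g\in L^2(\R^d)$. I would handle this by first carrying out the calculation for $f$ in a dense subspace of $L^2(\R^d)$ (bounded, compactly supported functions), where every sum converges absolutely and Fubini applies without obstruction, and then extending the resulting quadratic identity to all of $L^2(\R^d)$ by density, using the a priori upper bound on $S(f)$ that the identity itself provides on that dense subspace.
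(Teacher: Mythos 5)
The paper does not prove this proposition --- it is quoted verbatim from Czaja's paper \cite{Czaja2000} --- so there is no internal argument to compare against. Your proposal is a correct outline of the standard periodization (Walnut-type) proof, and it is essentially the argument one finds in the literature for this characterization: fold $h_\l=f\,\overline{g(\cdot-A\l)}$ onto a fundamental domain $Q$ of $(B^T)^{-1}\Z^d$, apply Parseval for the orthonormal basis $\{\sqrt{b}\,e^{2\pi i\langle B\k,\cdot\rangle}\}_{\k\in\Z^d}$ of $L^2(Q)$, unfold, and sum over $\l$. The test-function argument for the only-if direction (small-diameter $E$ to isolate $\Phi_0$, then $\chi_E+\alpha\chi_{E+(B^T)^{-1}\k_0}$ with $\alpha=1,i$ to kill $\Phi_{\k_0}$) is correct as stated, since the nonzero vectors of the lattice $(B^T)^{-1}\Z^d$ have a positive minimal length.

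Two technical points deserve explicit mention if you write this up in full. First, Parseval on $Q$ must be applied in the extended sense: for $F\in L^1(Q)$ one has $\sum_\k|\widehat F(\k)|^2=\|F\|^2_{L^2(Q)}$ in $[0,\infty]$ (if the coefficients are square-summable, $F$ agrees a.e.\ with the $L^2$ sum by uniqueness of Fourier coefficients of $L^1$ functions); this is needed because $Ph_\l$ is only known a priori to lie in $L^1(Q)$. Second, the interchange of $\sum_\l$ with $\sum_\k\int$ is not covered by Tonelli since the terms are complex; it is justified for bounded compactly supported $f$ because only finitely many $\k$ contribute and, for each fixed $\k$, $\sum_\l\int|h_\l(\t)||h_\l(\t-(B^T)^{-1}\k)|\,d\t$ is finite by Cauchy--Schwarz together with the local integrability of $\Phi_0$ (which is automatic: $\int_{A[0,1)^d}\Phi_0=\|g\|^2<\infty$). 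Likewise, the passage from $\int_{E}\Phi_{\k_0}=0$ for all small $E$ to $\Phi_{\k_0}=0$ a.e.\ uses that $\Phi_{\k_0}\in L^1_{loc}$, which follows from $|\Phi_{\k_0}|\le\Phi_0^{1/2}\,\Phi_0(\cdot-(B^T)^{-1}\k_0)^{1/2}$ once \eqref{cond1} is established. With these points supplied, the proof is complete.
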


Furthermore, it has been shown by Han and Wang \cite{HW2001} that a single function Gabor frame generator exists for $L^2(\R^d)$ if and only if $|\det(AB)|\le 1$.

\medskip
A function $h\in L^{\infty}(\R^d)$ is called a {\em functional Gabor frame multiplier} if it has the property that $hg$ is a normalized tight Gabor frame generator whenever $g$ is.
Gu and Han \cite{GH2003} investigated functional Gabor frame multipliers in the one-dimensional case and obtained the following characterization: $h\in L^{\infty}(\R)$ is a functional Gabor frame multiplier for the
full-rank lattices  $\K=a\Z$ and $\L=b\Z$ if and only if it is unimodular and $h(t)\overline{h(t+\frac{1}{b})}$ is $a$-periodic. The generalizations of this result in the higher dimensions (one form of which is stated in Conjecture \ref{conj1} below) may seem plausible, but remain an unsolved problem. This is the main subject of study in this paper. The difficulty here is due to the fact that such characterizations require rather technical/sophisticated treatments of complicated cases that occur in the high dimensional time-frequency lattices. More work on and applications of high dimensional Gabor frames in image signal processing can be found in \cite{Casa, FH1, FH2}.

\medskip
\begin{conjecture}\label{conj1}
Given two full-rank lattices  $\L=A\Z^d$ and $\K=B\Z^d$ in $\R^d$, where $A$ and $B$ are nonsingular real $d\times d$ matrices such that $|\det(AB)|\le 1$, then a function $h\in L^{\infty}(\R^d)$ is a functional Gabor frame multiplier if and only if $h$ is unimodular and $h(\x)\overline{h(\x-(B^T)^{-1}\k)}=h(\x-A\l)\overline{h(\x-A\l-(B^T)^{-1}\k)}$ for any $\x\in \R^d$ {\em(a.e.)}, $\l,\k\in \Z^d$, $\k\not=\0$.
\end{conjecture}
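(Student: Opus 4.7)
Suppose $h \in L^\infty(\R^d)$ is unimodular and satisfies the functional equation. Given any Parseval Gabor frame generator $g$, I verify that $hg$ satisfies the two conditions of Proposition \ref{prop1}. Condition (\ref{cond1}) for $hg$ reads $\sum_\l |h(\x-A\l)|^2 |g(\x-A\l)|^2 = \sum_\l |g(\x-A\l)|^2 = b$ since $|h|=1$ a.e. For condition (\ref{cond2}), the hypothesis lets me replace $h(\x-A\l)\overline{h(\x-A\l-(B^T)^{-1}\k)}$ by the $\l$-independent quantity $h(\x)\overline{h(\x-(B^T)^{-1}\k)}$, which factors out of the sum, reducing (\ref{cond2}) for $hg$ to (\ref{cond2}) for $g$.

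\textbf{Only if direction, unimodularity.} Assume $h$ is a functional Gabor frame multiplier. Since $|\det(AB)|\le 1$, I select a measurable fundamental domain $E \subset \R^d$ for $A\Z^d$ whose $(B^T)^{-1}\Z^d$-translates are a.e.~pairwise disjoint (choose a set of representatives of $\R^d/A\Z^d$ inside a fundamental domain of $(B^T)^{-1}\Z^d$, whose volume $1/|\det B|$ accommodates $|E|=|\det A|$). Then $g_E := \sqrt{b}\,\chi_E$ satisfies both Czaja conditions and is a Parseval Gabor frame generator. Applying (\ref{cond1}) to $hg_E$ gives $\sum_\l |h(\x-A\l)|^2\chi_E(\x-A\l) = 1$ a.e.; since for a.e.~$\x$ exactly one translate lies in $E$, we get $|h|=1$ a.e.~on $E$. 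The family $\{g_{E+\t} : \t \in \R^d\}$ consists of Parseval generators (the tiling and packing properties are translation-invariant), so this argument, run for every translate, gives $|h|=1$ a.e.~on $\R^d$.

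\textbf{Only if direction, functional equation.} With $|h|=1$ in hand, set $P_\l(\x,\k) := h(\x-A\l)\overline{h(\x-A\l-(B^T)^{-1}\k)}$. Subtracting $P_0(\x,\k)$ times (\ref{cond2}) for $g$ from (\ref{cond2}) applied to $hg$ gives, for every Parseval Gabor frame generator $g$, every $\k\neq\0$ and a.e.~$\x$,
\[
\sum_{\l \in \Z^d} \bigl[P_\l(\x,\k) - P_0(\x,\k)\bigr]\, g(\x-A\l)\,\overline{g(\x-A\l-(B^T)^{-1}\k)} = 0.
\]
To conclude $P_\l = P_0$ a.e.~for every $\l$, I would construct, for each target triple $(\x,\l,\k)$, a Parseval generator of the form $g = \sqrt{b}(\chi_{E_1} + e^{i\theta}\chi_{E_2})$ supported on two small pieces chosen so that the products $g(\x - A\l')\overline{g(\x - A\l' - (B^T)^{-1}\k)}$ are nonzero only at $\l'=0$ and $\l'=\l$. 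The displayed identity then collapses to a single nonzero scalar multiple of $P_\l(\x,\k)-P_0(\x,\k)$, forcing equality at $\x$. Exhausting $\l$, $\k$, and a.e.~$\x$ yields the functional equation.

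\textbf{Main obstacle.} The crux is the construction of the isolating Parseval generator in arbitrary dimension $d$. For the two-piece $g$ above to be Parseval, the union $E_1 \cup E_2$ must tile under $A\Z^d$, and (\ref{cond2}) must hold for \emph{every} $\k'\neq\0$, not just the target $\k$. Arranging the geometry of $E_1, E_2$ and the phase $\theta$ to satisfy these global constraints while simultaneously realizing the precise single-pair interaction at $(\x,\l,\k)$ is a combinatorial-geometric problem about the arrangement of translates of $A\Z^d$ and $(B^T)^{-1}\Z^d$. In $d=1$ the linear order on $\R$ reduces the matter to a short interval argument (Gu--Han); in $d=2$ planar case analysis still suffices, which is what the present paper carries out. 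For $d \ge 3$ the intersection pattern grows rapidly more intricate, and I expect the extension to require either an inductive slicing onto sublattices (reducing to lower-dimensional Gabor systems) or a non-constructive density argument showing that as $g$ ranges over Parseval generators the products $g(\x-A\l')\overline{g(\x-A\l'-(B^T)^{-1}\k)}$ span a sufficiently rich family of sequences to separate the coefficients $P_\l-P_0$. This combinatorial-geometric step is the main obstruction and is the reason the conjecture, as stated, remains open beyond $d=2$.
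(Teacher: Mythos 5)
Your proposal is correct as far as it goes and follows essentially the same route as the paper: the if direction is the paper's proof of Theorem \ref{Theorem0} verbatim in spirit, the unimodularity argument via $\sqrt{b}\,\chi_E$ for a set tiling by $A\Z^d$ and packing by $(B^T)^{-1}\Z^d$ (whose existence is the nontrivial Han--Wang theorem \cite{HW2001}, not just a measure count) is Lemma \ref{Lemma4}, and your ``isolating generator'' supported on a few translated pieces is precisely the mechanism of Lemmas \ref{Lemma6} and \ref{Lemma7}. You correctly identify that the remaining content is the combinatorial-geometric construction of these generators, which the paper carries out only for $d=2$ through the case analysis of Sections \ref{Sec_Proof}--\ref{Proof3}, and which is exactly why the conjecture remains open for $d\ge 3$.
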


\medskip
Let $h\in L^{\infty}(\R^d)$ such that
$h(\x)\overline{h(\x-(B^T)^{-1}\k)}=h(\x-A\l)\overline{h(\x-A\l-(B^T)^{-1}\k)}$ for any $\x\in \R^d$, $\l,\k\in \Z^d$, $\k\not=\0$ and $|h(\x)|=1$ for any $\x\in \R^d$, and let $g$ be a normalized tight Gabor frame generator. Then we have
\begin{eqnarray*}
\sum_{\l\in\Z^d}|h(\x-A\l)g(\x-A\l)|^2&=&\sum_{\l\in\Z^d}|g(\x-A\l)|^2=b
\end{eqnarray*}
and
\begin{eqnarray*}
&&\sum_{\l\in\Z^d}h(\x-A\l)g(\x-A\l)\overline{h(\x-A\l-(B^T)^{-1}\k)g(\x-A\l-(B^T)^{-1}\k)}\\
&=&\sum_{\l\in\Z^d}h(\x-A\l)\overline{h(\x-A\l-(B^T)^{-1}\k)}g(\x-A\l)\overline{g(\x-A\l-(B^T)^{-1}\k)}\\
&=&
\sum_{\l\in\Z^d}h(\x)\overline{h(\x-(B^T)^{-1}\k)}g(\x-A\l)\overline{g(\x-A\l-(B^T)^{-1}\k)}\\
&=&
h(\x)\overline{h(\x-(B^T)^{-1}\k)}\sum_{\l\in\Z^d}g(\x-A\l)\overline{g(\x-A\l-(B^T)^{-1}\k)}\\
&=&0
\end{eqnarray*}
by equations (\ref{cond1}) and (\ref{cond2}). Thus $hg$ is a normalized tight Gabor frame generator by Proposition \ref{prop1}. This shows that the if part of Conjecture \ref{conj1} is true as stated in the following theorem.

\begin{theorem}\label{Theorem0}
Given two full-rank lattices  $\L=A\Z^d$ and $\K=B\Z^d$ in $\R^d$, where $A$ and $B$ are nonsingular real $d\times d$ matrices such that $|\det(AB)|\le 1$, then a function $h\in L^{\infty}(\R^d)$ is a functional Gabor frame multiplier if $h$ is unimodular and $h(\x)\overline{h(\x-(B^T)^{-1}\k)}=h(\x-A\l)\overline{h(\x-A\l-(B^T)^{-1}\k)}$ for any $\x\in \R^d$, $\l,\k\in \Z^d$, $\k\not=\0$.
\end{theorem}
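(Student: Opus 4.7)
My plan is to apply Czaja's characterization (Proposition \ref{prop1}) to the product $hg$ and verify both conditions (\ref{cond1}) and (\ref{cond2}). Since $g$ is a normalized tight Gabor frame generator by assumption, these conditions hold for $g$; the task is to transfer them to $hg$ using the two hypotheses on $h$, namely unimodularity and the cocycle-type identity $h(\x)\overline{h(\x-(B^T)^{-1}\k)}=h(\x-A\l)\overline{h(\x-A\l-(B^T)^{-1}\k)}$.

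First, for condition (\ref{cond1}), I would note that unimodularity gives $|h(\x-A\l)g(\x-A\l)|^2=|g(\x-A\l)|^2$ almost everywhere, so $\sum_{\l}|h(\x-A\l)g(\x-A\l)|^2=\sum_{\l}|g(\x-A\l)|^2=b$ follows immediately from (\ref{cond1}) applied to $g$. Second, for condition (\ref{cond2}), I would expand each summand as $h(\x-A\l)\overline{h(\x-A\l-(B^T)^{-1}\k)}\cdot g(\x-A\l)\overline{g(\x-A\l-(B^T)^{-1}\k)}$ and invoke the functional equation to rewrite the first factor as $h(\x)\overline{h(\x-(B^T)^{-1}\k)}$, which is independent of $\l$. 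This constant then pulls outside the sum, leaving $h(\x)\overline{h(\x-(B^T)^{-1}\k)}\sum_{\l}g(\x-A\l)\overline{g(\x-A\l-(B^T)^{-1}\k)}$; the inner sum vanishes by condition (\ref{cond2}) for $g$, so the full sum vanishes as well.

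I do not anticipate any substantial obstacle. Both conditions reduce directly to the hypotheses on $h$ together with the corresponding Czaja conditions for $g$, and no convergence or measure-theoretic subtleties arise beyond the almost-everywhere equalities already built into Proposition \ref{prop1}. The key conceptual observation is that the functional equation imposed on $h$ is designed precisely to kill the $\l$-dependence of $h(\x-A\l)\overline{h(\x-A\l-(B^T)^{-1}\k)}$, which is exactly what is needed to factor $h$ out of the summation in (\ref{cond2}) and thereby reuse the vanishing property of $g$ verbatim on $hg$.
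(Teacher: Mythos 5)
Your proposal is correct and follows essentially the same route as the paper: unimodularity gives (\ref{cond1}) for $hg$ at once, and the functional equation makes $h(\x-A\l)\overline{h(\x-A\l-(B^T)^{-1}\k)}$ independent of $\l$ so it factors out of the sum in (\ref{cond2}), which then vanishes because it does for $g$. Nothing further is needed.
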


\medskip
The main result of this paper is the proof that the only if part of Conjecture \ref{conj1} holds when $d=2$.

\begin{theorem}\label{Theorem1}
Let $A$ and $B$ be $2\times 2$ real matrices such that $|\det(AB)|\le 1$, then a function $h$ is a functional Gabor frame multiplier (under the lattices  $\L=A\Z^2$ and $\K=B\Z^2$) only if $h$ is unimodular and $h(\x)\overline{h(\x-(B^T)^{-1}\k)}=h(\x-A\l)\overline{h(\x-A\l-(B^T)^{-1}\k)}$ for any $\l,\k\in \Z^2$, $\k\not=\0$.
\end{theorem}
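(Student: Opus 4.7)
The plan is to apply Proposition~\ref{prop1} to a carefully chosen family of Parseval Gabor frame generators $g$ and exploit the fact that, by hypothesis, $hg$ must satisfy both (\ref{cond1}) and (\ref{cond2}) as well. The argument splits into two parts: establishing unimodularity of $h$, and then the bilinear periodicity identity.

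For unimodularity, I would first construct a measurable set $E\subset\R^2$ that tiles $\R^2$ under $A\Z^2$-translations while simultaneously packing under $(B^T)^{-1}\Z^2$-translations, i.e.\ $E\cap(E+(B^T)^{-1}\k)=\emptyset$ for every nonzero $\k\in\Z^2$. Such an $E$ has measure $|\det A|$, and since $|\det(AB)|\le 1$ gives $|\det A|\le 1/|\det B|$ (the covolume of $(B^T)^{-1}\Z^2$), its existence follows from the kind of lattice transversal construction underlying the Han--Wang existence theorem. Then $g_0=\sqrt{b}\,\chi_E$ satisfies (\ref{cond1}) because $E$ tiles $A\Z^2$, and satisfies (\ref{cond2}) trivially because $E$ packs $(B^T)^{-1}\Z^2$. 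Applying (\ref{cond1}) to $hg_0$ collapses to $|h(\x-A\l^*)|^2=1$ for the unique $\l^*$ with $\x-A\l^*\in E$, and since $E$ tiles this gives $|h|=1$ a.e.\ on $\R^2$.

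For the bilinear identity, fix $\k\in\Z^2\setminus\{\0\}$ and set $H_\k(\x):=h(\x)\,\overline{h(\x-(B^T)^{-1}\k)}$. The goal is to show that the unimodular function $H_\k$ is $A\Z^2$-periodic. For any Parseval $g$, applying (\ref{cond2}) to both $g$ and to $hg$ gives, for a.e.\ $\x$,
\begin{align*}
\sum_{\l\in\Z^2} H_\k(\x-A\l)\, g(\x-A\l)\overline{g(\x-A\l-(B^T)^{-1}\k)} &= 0, \\
\sum_{\l\in\Z^2} g(\x-A\l)\overline{g(\x-A\l-(B^T)^{-1}\k)} &= 0.
\end{align*}
The strategy is, for each prescribed pair $\l_1\ne\l_2$ in $\Z^2$ and each generic point $\x_0$, to engineer a Parseval generator $g$ of the form $c_1\chi_{E_1}+c_2\chi_{E_2}$ (possibly with an extra phase factor on one of the pieces) so that, for $\x$ in a small neighborhood of $\x_0$, both sums above have exactly two nonzero terms $\alpha,\beta$, arising from $\l=\l_1$ and $\l=\l_2$. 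The second identity then reads $\alpha+\beta=0$ and the first reads $H_\k(\x-A\l_1)\alpha + H_\k(\x-A\l_2)\beta=0$; since $\alpha,\beta\ne 0$ these force $H_\k(\x-A\l_1)=H_\k(\x-A\l_2)$. Taking $\l_1=\0$ and $\l_2$ arbitrary yields the claimed identity for $h$.

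The main obstacle is the construction of the isolating tiles $E_1,E_2$ in the last step, together with the verification that the resulting $g$ is genuinely Parseval and that the desired two-term isolation holds on a set of positive measure. In $d=1$ the tiles can be taken to be intervals and the arithmetic is essentially one-parameter, which is why the Gu--Han proof goes through directly. In $d=2$, by contrast, the lattices $A\Z^2$ and $(B^T)^{-1}\Z^2$ can interact in qualitatively different ways governed by the arithmetic of the matrix $A^{-1}(B^T)^{-1}$: the subgroup $A\Z^2\cap(B^T)^{-1}\Z^2$ may have rank $0$, $1$, or $2$, and in the rank-$1$ case the direction of the common sublattice is an additional parameter. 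I therefore expect the bulk of the proof to be a careful case analysis constructing $E_1,E_2$ as unions of suitably positioned parallelograms (or more general measurable pieces) adapted to each geometric type of the pair $(A,B)$, and showing that in every case the requisite isolation can be arranged.
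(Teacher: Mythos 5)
Your overall architecture matches the paper's: unimodularity from $\sqrt{b}\,\chi_E$ with $E$ tiling by $A\Z^2$ and packing by $(B^T)^{-1}\Z^2$ (you do need to run this over all translates of $E$ to get $|h|=1$ on all of $\R^2$, not just on $E$, but that is a one-line fix), and then the periodicity of $H_\k$ from two-term cancellation using a generator with a sign flip on one piece --- this is exactly the paper's Lemma \ref{Lemma6}(i)--(ii), with the reduction to the four basic pairs $(\l_i,\k_j)$ handled by Lemma \ref{Lemma5} and the case analysis driven by the Han--Wang tiling lemmas.

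However, there is a genuine gap: the two-term isolation you propose is \emph{impossible} in the degenerate cases where the frequency-lattice generator being tested coincides with (or is an integer multiple of) the translation-lattice generator, e.g.\ $\k_j=\pm\l_i$, and in particular when $D=(B^TA)^{-1}=\pm I$ so that $\K=\L$. Concretely, if $\l_1\in\K$ and $g$ is supported on two $\L$-translates $E_2$ and $E_2+\l_1$ with nonzero values $c_1,c_2$, then for $\x\in E_2$ the orthogonality sum (\ref{cond22}) at $\k=\l_1$ contains the single nonzero term $c_2\overline{c_1}\ne 0$, so no such $g$ is Parseval; no finite isolation along that direction can be arranged. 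The paper handles this (Lemma \ref{Lemma6}(iii) and the Type III/V cases) by a different mechanism: it builds $g$ from a cyclic wandering vector $\{\eta_n\}$ of the bilateral shift on $\ell^2(\Z)$ (or $\ell^2(\Z^2)$), spread over the full orbit $\{\x-n\l_1\}$, and then invokes the classification of wandering vector multipliers from Gu--Han and Han--Larson to conclude $h(\x-n\l_1)=\lambda e^{2\pi i n\tau}$, from which the identity follows. This is an essential extra ingredient your plan does not anticipate, and without it the proof does not cover, for instance, $B^TA=\pm I$, which is admissible under $|\det(AB)|\le 1$. Beyond that, the remaining work --- which you correctly flag as the bulk of the proof --- is the case-by-case construction of the sets $E_1,\dots,E_5$ according to the real Jordan form of $D$ and the rationality of its entries, using the Han--Wang common-fundamental-domain lemmas.
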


In the next section, we introduce and prove a few important lemmas in preparation for the proof of Theorem \ref{Theorem1}. This approach allows us to simplify the proof as it involves many different cases. The proof of the main theorem is given in Sections \ref{Sec_Proof} and \ref{Sec_Proof2}: Section \ref{Sec_Proof} covers the case when the eigen values of $(B^TA)^{-1}$ have absolute values at least one and Section \ref{Sec_Proof} covers the case when one of the eigen values of $(B^TA)^{-1}$ has absolute value less than one. In the last section, we discuss several cases where our result can be generalized to higher dimensions and the challenges for the questions that remain unanswered.

\section{Some Simplifications and Lemmas}\label{Sec_Lemma}

In this section, $A$ and $B$ are nonsingular real valued $2\times 2$ matrices such that $|\det(AB)|\le 1$. We consider the normalized tight Gabor frames, namely functions $g\in L^2(\R^2)$ with the property that $\{e^{2\pi i\langle B\k,\x\rangle}g(\x-A\l): \l,\k\in \Z^2\}$ forms a normalized tight frame.

\medskip
Since $(B^TA)^{-1}$ is a real matrix, there exists a real valued $2\times 2$ orthogonal matrix $P$ such that
$P^{-1}(B^TA)^{-1}P=D$ is the real canonical Jordan form of $(B^TA)^{-1}$. Let $\z=(AP)^{-1}\x$, $\L=P^{-1}\Z^2$ and $\K=D\L$, then a normalized tight Gabor frame system $\{g(\x-A\l-(B^T)^{-1}\k): \l,\k\in \Z^2\}$ can be rewritten as
$\{g(AP(\z-\l-\k)): \l\in \L,\ \k\in\K\}=\{\tilde{g}(\z-\l-\k): \l\in \L,\ \k\in\K\}$ by a variable substitution $g(AP(\x))=\tilde{g}(\x)$. The following result is immediate.

\begin{lemma}\label{Lemma2}
If $\tilde{g}(\z)$ is a normalized tight Gabor frame function under the lattices $\L=P^{-1}\Z^2$ and $\K=D\L$, that is, the system $\{e^{2\pi i\langle (D^T)^{-1}\k,\z\rangle}\tilde{g}(\z-\l): \l,\k\in \L\}$ is a normalized tight Gabor frame, then so is $\{e^{2\pi i\langle B\k,\x\rangle} g(\x-A\l): \l,\k\in \Z^2\}$ with $\z=(AP)^{-1}\x$ and $g(\x)=\tilde{g}((AP)^{-1}\x)$. On the other hand, if $\tilde{h}(\z)$ is a functional multiplier such that $\tilde{h}(\z)\overline{\tilde{h}(\z-\k)}=\tilde{h}(\z-\l)\overline{\tilde{h}(\z-\l-\k)}$ for any $\z\in \R^2$, $\l\in \L=P^{-1}\Z^2$, $\k\in\K=D\L$, $\k\not=\0$, then $h(\x)\overline{h(\x-(B^T)^{-1}\k)}=h(\x-A\l)\overline{h(\x-A\l-(B^T)^{-1}\k)}$ for any $\x\in \R^2$, $\l,\k\in \Z^2$, $\k\not=\0$.
\end{lemma}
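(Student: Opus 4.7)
The statement is essentially a change-of-variables principle: after the linear substitution $\z=(AP)^{-1}\x$, the original Parseval Gabor frame setup and the multiplier identity relative to the lattices $A\Z^2$ and $B\Z^2$ convert, respectively, into the analogous Parseval Gabor frame setup and multiplier identity relative to the new lattices $\L$ and $\K$. My plan is to verify this correspondence by carrying out three short algebraic computations corresponding to (i) the translation part of the Gabor system, (ii) the modulation part, and (iii) the ``dual'' shifts $(B^T)^{-1}\k$ that appear in Proposition~\ref{prop1} and in the multiplier identity.

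For (i), I will observe that a translation $\x\mapsto\x-A\l$ pulls back under $\z=(AP)^{-1}\x$ to $\z\mapsto\z-P^{-1}\l$, and that $P^{-1}\l$ ranges bijectively over $\L=P^{-1}\Z^2$ as $\l$ ranges over $\Z^2$. For (ii), I will combine the orthogonality of $P$ with $(B^TA)^{-1}=PDP^{-1}$ to derive
\[
P^TA^TB=(D^T)^{-1}P^{-1},
\]
so that $e^{2\pi i\langle B\k,\x\rangle}=e^{2\pi i\langle P^TA^TB\k,\z\rangle}$ becomes $e^{2\pi i\langle (D^T)^{-1}(P^{-1}\k),\z\rangle}$ in the $\z$-variable, and again $P^{-1}\k$ ranges bijectively over $\L$ as $\k$ ranges over $\Z^2$. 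Combining (i) and (ii), the substitution $g(\x)=\tilde{g}((AP)^{-1}\x)$ sends the original Gabor system into the system in the $\z$-variable, modulo a harmless Jacobian factor $|\det(AP)|^{1/2}$ that merely rescales the overall frame bound; hence $\tilde{g}$ being normalized tight in the $\z$-coordinates will force $g$ to be normalized tight in the $\x$-coordinates.

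For (iii), I will invoke the identity $APDP^{-1}=A(B^TA)^{-1}=(B^T)^{-1}$, which shows that a shift $\x\mapsto\x-(B^T)^{-1}\k$ pulls back to $\z\mapsto\z-DP^{-1}\k$, and $DP^{-1}\k$ ranges bijectively over $D\L=\K$ as $\k$ varies over $\Z^2$. Setting $h(\x)=\tilde{h}((AP)^{-1}\x)$ and plugging into the hypothesized identity on $\tilde{h}$, then using the dictionaries $\l\leftrightarrow P^{-1}\l$ (for the translation parameter) and $\k\leftrightarrow DP^{-1}\k$ (for the dual-shift parameter), will produce the desired identity $h(\x)\overline{h(\x-(B^T)^{-1}\k)}=h(\x-A\l)\overline{h(\x-A\l-(B^T)^{-1}\k)}$ term-by-term. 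There is no genuine obstacle here: the argument is pure bookkeeping, and the only points requiring real care are the two algebraic identities $P^TA^TB=(D^T)^{-1}P^{-1}$ and $APDP^{-1}=(B^T)^{-1}$, both of which follow immediately from the definition of $D$ and the orthogonality of $P$.
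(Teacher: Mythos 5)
Your proposal is correct and is exactly the change-of-variables computation the paper has in mind: the paper states the lemma as ``immediate'' after setting up $\z=(AP)^{-1}\x$, and your three identities ($P^{-1}A^{-1}\cdot A\l=P^{-1}\l$, $P^TA^TB=(D^T)^{-1}P^{-1}$, and $APDP^{-1}=(B^T)^{-1}$, the last two using orthogonality of $P$) are precisely the bookkeeping being left implicit. The only caveat is the Jacobian factor you yourself flag: since $|\det P|=1$, the pulled-back system is tight with bound $|\det A|$ rather than literally Parseval unless the constant $|\det(AP)|^{-1/2}$ is inserted into the definition of $g$ --- a normalization the paper also glosses over and which is irrelevant both to the multiplier identity in the second half and to the lemma's role in the reduction.
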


With Lemma \ref{Lemma2}, we now only need to prove Theorem \ref{Theorem1} for the special case where $A=\begin{bmatrix}1&0\\0&1\end{bmatrix}$ and $(B^T)^{-1}=D$ is a matrix that is of the real canonical Jordan form, since $\L$ can be regarded as $\Z^2$ using its two standard basis element $\vec{e}_1=P^{-1}\begin{bmatrix}1\\0\end{bmatrix}$ and $\vec{e}_2=P^{-1}\begin{bmatrix}0\\1\end{bmatrix}$. More specifically, it suffices for us to prove that if $h$ is a functional multiplier for normalized tight Gabor frame functions under the lattices $\L=\Z^2$ and $\K=D\L$, then $h$ must be unimodular and the following equation must hold
\begin{eqnarray}
h(\x)\overline{h(\x-\k)}&=&h(\x-\l)\overline{h(\x-\l-\k)} \label{special_form}
\end{eqnarray}
for any $\x\in \R^2$,  $\l\in \L$, $\0\not=\k\in \K$. Furthermore, equations (\ref{cond1}) and (\ref{cond2}) become
\begin{eqnarray}
\sum_{\l\in\L}|g(\x-\l)|^2&=&d_0, \label{cond12}\\
\sum_{\l\in\L}g(\x-\l)\overline{g(\x-\l-\k)}&=&0\ \forall\ \0\not=\k\in \K \label{cond22}
\end{eqnarray}
where $d_0=|\det(D^{-1})|$.

Let $\O\subset \R^2$ be a measurable set and $\F$ any given full rank lattice of $\R^2$. We say that $\O$ packs $\R^2$ by $\F$ if $\O\cap (\O+\f)=\emptyset $ for any nontrivial $\f\in \F$. Furthermore, if we also have  $\R^2=\cup_{\f\in \F}(\O+\f)$ then we say that $\O$ tiles $\R^2$ by $\F$. In general, for any two measurable sets $S_1$ and $S_2$ that pack $\R^2$ by $\F$, we say that $S_1$ and $S_2$ are {\em $\F$ equivalent} if for each $\x\in S_1$, $\x=\y+\f$ for some $\y\in S_2$ and $\f\in \F$, and we say that $S_1$ and $S_2$ are {\em $\F$ disjoint} if $S_1\cap (S_2+\f)=\emptyset$ for any nontrivial $\f\in \F$.
By \cite{HW2001}, there exists a measurable set $\O\subset \R^2$ such that $\O$ tiles $\R^2$ by $\L$ and packs $\R^2$ by $\K$. Since the unit square $S=[0,1)\times [0,1)$ tiles $\R^2$ by $\L$ and  the parallelogram $R$ spanned by the two column vectors of $D$ tiles $\R^2$ by $\K$, this means that $\O$ is $\L$ equivalent to $S$ and $\K$ equivalent to a subset of $R$.

\begin{lemma}\label{Lemma4}
If $h$ is a functional Gabor frame multiplier, then $h$ is unimodular.
\end{lemma}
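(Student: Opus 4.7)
My plan is to exhibit a translation-indexed family of explicit Parseval Gabor frame generators whose modulus is a multiple of a characteristic function, so that the first Czaja identity for $hg$ collapses into pointwise equations for $|h|$, and then to upgrade these pointwise equations from a single tile to all of $\R^2$ by a Fubini-type averaging.

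First, I would use the set $\O$ constructed just before the lemma (which tiles $\R^2$ by $\L$ and packs $\R^2$ by $\K$) to set $g_0\df\sqrt{d_0}\,\chi_\O$, and observe that for every $\t\in\R^2$ the translate $\O+\t$ still $\L$-tiles and $\K$-packs $\R^2$. Hence $g_\t(\x)\df g_0(\x-\t)=\sqrt{d_0}\,\chi_{\O+\t}(\x)$ satisfies (\ref{cond12}) (the $\L$-tiling collapses the sum to $d_0$) and (\ref{cond22}) (the $\K$-packing makes every $\k\neq\0$ cross-term vanish term-by-term), so by Proposition \ref{prop1} each $g_\t$ is a normalized tight Gabor frame generator.

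Next, the multiplier hypothesis will force $hg_\t$ to be a normalized tight Gabor frame generator for every $\t$; applying (\ref{cond12}) to $hg_\t$ then yields, for a.e.\ $\x\in\R^2$,
\[
\sum_{\l\in\L}|h(\x-\l)|^2\,\chi_{\O+\t}(\x-\l)=1.
\]
Since $\O+\t$ $\L$-tiles $\R^2$, exactly one term of this sum is nonzero, so the identity reduces to $|h(\y)|^2=1$ for a.e.\ $\y\in\O+\t$. Setting $E\df\{\y\in\R^2:|h(\y)|^2\neq 1\}$ (a measurable subset of $\R^2$ since $h\in L^{\infty}(\R^2)$), this reads $|E\cap(\O+\t)|=0$ for every $\t\in\R^2$.

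To finish, a Tonelli computation
\[
0=\int_{\R^2}|E\cap(\O+\t)|\,d\t=\int_{\R^2}\!\!\int_{\R^2}\chi_E(\y)\,\chi_\O(\y-\t)\,d\y\,d\t=|E|\cdot|\O|,
\]
together with $|\O|>0$, will force $|E|=0$, so $|h|=1$ almost everywhere and $h$ is unimodular. The step that I expect to require genuine care is this last one: passing from an uncountable family of a.e.\ identities (one per $\t$) to a single a.e.\ identity on $\R^2$. A priori the null set in ``$|h|=1$ a.e.\ on $\O+\t$'' could depend on $\t$, but the key observation enabling the Fubini swap is that $E$ is itself a single measurable subset of $\R^2$ that does not depend on $\t$.
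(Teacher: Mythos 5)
Your proof is correct and follows essentially the same route as the paper: translates of the Han--Wang set $\O$ give the family of Parseval generators $\sqrt{d_0}\,\chi_{\O+\t}$, and the first Czaja identity applied to $h\sqrt{d_0}\,\chi_{\O+\t}$ forces $|h|=1$ a.e.\ on $\O+\t$. Your concluding Tonelli computation is a welcome refinement: the paper simply asserts that every $\x$ lies in a suitable translate of $\O$, whereas you correctly address the fact that the exceptional null set could a priori depend on $\t$.
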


\begin{proof}
Let $\O\subset \R^2$ be a measurable set that tiles $\R^2$ by $\L$ and packs $\R^2$ by $\K$. Since this statement applies to any translation of $\O$, it is obvious that $g(\x)$ defined by $g(\x)=\sqrt{d_0}\chi_{\O^\p}$ is a normalized tight frame, where $\O^\p$ is any translation of $\O$. Since for any $\x\in\R^2$, $\x\in \chi_{\O^\p}$ for some suitably chosen translation of $\O$, and $h\sqrt{d_0}\chi_{\O^\p}$ is a normalized tight Gabor frame, it follows that $|h(\x)|=1$.
\end{proof}

Thus the only thing remained to be proven for Theorem \ref{Theorem1} is that a functional multiplier must satisfy equation (\ref{special_form}).

\medskip
Let $\l_1=(1,0)^T$, $\l_2=(0,1)^T$, $\k_1=D(1,0)^T$ and $\k_2=D(0,1)^T$ be the standard basis for $\L$ and $\K=D\L$. The following lemmas provide us some very useful tools in the proof of Theorem \ref{Theorem1}.

\medskip

\begin{lemma}\label{Lemma5}
Let $h(\x)$ be a unimodular function that satisfies the following conditions for any $\x\in \R^2$:
\begin{eqnarray*}
&&h(\x)\overline{h(\x-\k_1)}=h(\x-\l_1)\overline{h(\x-\l_1-\k_1)},\quad h(\x)\overline{h(\x-\k_1)}=h(\x-\l_2)\overline{h(\x-\l_2-\k_1)},\\
&&h(\x)\overline{h(\x-\k_2)}=h(\x-\l_1)\overline{h(\x-\l_1-\k_2)},\quad
h(\x)\overline{h(\x-\k_2)}=h(\x-\l_2)\overline{h(\x-\l_2-\k_2)},
\end{eqnarray*}
then (\ref{special_form}), hence the statement of Theorem \ref{Theorem1}, holds. Furthermore, in each of the above equations, $-\l_j$ can be replaced by $+\l_j$ and $-\k_j$ can also be replaced by $+\k_j$ ($j=1,2$).
\end{lemma}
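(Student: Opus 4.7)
The plan is to reformulate the target identity (\ref{special_form}) as an $\L$-periodicity statement and then bootstrap from the hypothesized identities on the generators $\l_1,\l_2,\k_1,\k_2$ to every $\l\in\L$ and every $\k\in\K$. For each $\k\in\K$, set $\phi_\k(\x):=h(\x)\overline{h(\x-\k)}$. Then (\ref{special_form}) is precisely the assertion that every $\phi_\k$ is $\L$-periodic, i.e.\ $\phi_\k(\x)=\phi_\k(\x-\l)$ for all $\l\in\L$.

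First I would dispense with the ``furthermore'' clause by a trivial substitution, which is needed already to prime the induction. Replacing $\x$ by $\x+\l_j$ in one of the hypotheses swaps $-\l_j$ for $+\l_j$; replacing $\x$ by $\x+\k_j$ and then conjugating both sides swaps $-\k_j$ for $+\k_j$. Combined with the four stated identities, this gives that $\phi_{\pm\k_j}$ is invariant under translation by $\pm\l_i$ for $i,j\in\{1,2\}$. Since $\{\l_1,\l_2\}$ is a $\Z$-basis of $\L$, iterating these translations extends the invariance to full $\L$-periodicity of $\phi_{\k_1}$, $\phi_{\k_2}$, $\phi_{-\k_1}$, and $\phi_{-\k_2}$.

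Next I would exploit the telescoping identity
\[
\phi_{\k+\k'}(\x) \;=\; h(\x)\overline{h(\x-\k)}\cdot h(\x-\k)\overline{h(\x-\k-\k')} \;=\; \phi_\k(\x)\,\phi_{\k'}(\x-\k),
\]
which manifestly preserves $\L$-periodicity in $\x$ (the product of two $\L$-periodic functions, one shifted by $\k$, is again $\L$-periodic). A routine induction on $|m|+|n|$ for $\k=m\k_1+n\k_2\in\K$, starting from the base cases established in the previous paragraph and applying this cocycle identity at each step, extends $\L$-periodicity of $\phi_\k$ to every $\k\in\K$. This is exactly (\ref{special_form}).

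I do not anticipate any real obstacle; the substantive step is noticing the cocycle identity, after which the lemma becomes a standard ``extend from a generating set'' argument for the free abelian group $\K\cong\Z^2$. The unimodularity of $h$ is not actually needed in this part (it was already established in Lemma~\ref{Lemma4} and is carried only because the final conclusion of Theorem~\ref{Theorem1} asserts it).
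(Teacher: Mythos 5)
Your proof is correct and follows essentially the same route as the paper: the sign swaps are handled by the same substitutions, and your cocycle identity simply makes explicit the induction over $\l=m_1\l_1+m_2\l_2$ and $\k=n_1\k_1+n_2\k_2$ that the paper leaves to the reader. One correction to your closing remark: unimodularity \emph{is} used in this part, since the telescoping identity $\phi_{\k+\k'}(\x)=\phi_\k(\x)\,\phi_{\k'}(\x-\k)$ requires $h(\x-\k)\overline{h(\x-\k)}=1$; without it the product acquires the factor $|h(\x-\k)|^2$, and the argument would not close.
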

\begin{proof} If $h(\x)\overline{h(\x-\k_j)}=h(\x-\l_1)\overline{h(\x-\l_1-\k_j)}$ for any $\x$ ($j=1,2$), then replacing $\x$ by $\x-\l_1$ yields $h(\x)\overline{h(\x-\k_j)}=h(\x+\l_1)\overline{h(\x+\l_1-\k_j)}$. Similarly, if $h(\x)\overline{h(\x-\k_j)}=h(\x+\l_1)\overline{h(\x+\l_1-\k_j)}$, then replacing $\x$ by $\x+\l_1$ yields $h(\x)\overline{h(\x-\k_j)}=h(\x-\l_1)\overline{h(\x-\l_1-\k_j)}$. Similar argument can be applied to $\x$ and $\x\pm \k_j$. This proves the second part of the Lemma. So the given condition of the Lemma assures that $h(\x)\overline{h(\x\pm \k_j)}=h(\x\pm \l_i)\overline{h(\x\pm \l_i\pm\k_j)}$ for any $i,j\in\{1,2\}$ and any $\x\in \R^2$. From here it is easy to prove that $h(\x)\overline{h(\x-\k)}=h(\x-\l)\overline{h(\x-\l-\k)}$ for any $\l\in \L$, $\k\in \K$ and $\x\in \R^2$ by induction since
for any $\l\in \L$ and $\k\in \K$, we have $\l=m_1\l_1+m_2\l_2$ and $\k=n_1\k_1+n_2\k_2$ for some integers $m_1$, $m_2$, $n_1$ and $n_2$.
\end{proof}

\medskip
\begin{lemma}\label{Lemma6}
Let $h$ be a functional Gabor frame multiplier and let $\{\l_i, \k_j\}$ be a given pair with $1\le i,j\le 2$,  then $h(\x)\overline{h(\x\pm \k_j)}=h(\x\pm \l_i)\overline{h(\x\pm \l_i\pm \k_j)}$ for any $\x\in \R^2$ if one of the following conditions holds:\\
\noindent
(i) There exist disjoint and measurable sets $E_1$, $E_2$, $E_3$, $E_4$ and $E_5$, with $E_2$ being either a rectangle or a parallelogram, such that $E_3=E_2+\k_j$, $E_4=E_2+\l_i$, $E_5=E_3+\l_i=E_4+\k_j$, and $E_1\cup E_2\cup E_3$ tiles $\R^2$ by $\L$ while $E_1\cup E_2\cup E_4$ packs $\R^2$ by $\K$;\\
\noindent
(ii) $\k_j\in \L$, $\k_j\not=\pm \l_i$ and there exist disjoint and measurable sets $E_1$, $E_2$, $E_3$, $E_4$ and $E_5$, with $E_2$ being either a rectangle or a parallelogram, such that $E_3=E_2+\k_j$, $E_4=E_2+\l_i$, and $E_5=E_3+\l_i=E_4+\k_j$, $E_1\cup E_2$ tiles $\R^2$ by $\L$ and $E_1\cup E_2\cup E_4$ packs $\R^2$ by $\K$;\\
\noindent
(iii) $\k_j=\pm \l_i$.
\end{lemma}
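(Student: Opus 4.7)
The plan is, in each of the three cases, to build an explicit normalized tight Gabor frame generator $g$ out of the characteristic functions $\chi_{E_1},\ldots,\chi_{E_5}$ (or translates of a fundamental domain in case (iii)) with carefully chosen unimodular coefficients, invoke the multiplier hypothesis so that $hg$ is also a normalized tight Gabor frame generator, and then compare the cross-correlation condition (\ref{cond22}) at $\k=\k_j$ for $g$ and $hg$ to extract the desired identity for $h$. Since Proposition \ref{prop1} characterizes normalized tight Gabor frame generators by the algebraic identities (\ref{cond12}) and (\ref{cond22}), both the construction of $g$ and the subsequent extraction are essentially algebraic.

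For case (i), I would try
$$g=\sqrt{d_0}\Bigl(\chi_{E_1}+\tfrac{1}{\sqrt{2}}\chi_{E_2}+\tfrac{1}{\sqrt{2}}\chi_{E_3}+\tfrac{1}{\sqrt{2}}\chi_{E_4}-\tfrac{1}{\sqrt{2}}\chi_{E_5}\Bigr).$$
Condition (\ref{cond12}) follows from the fact that $E_1\cup E_2\cup E_3$ tiles $\R^2$ by $\L$ together with the $\L$-equivalences $E_4\equiv E_2$ and $E_5\equiv E_3$: each $\L$-orbit of a point meets the support either once inside $E_1$ or twice, once each in $E_2,E_4$ or once each in $E_3,E_5$, with the squared coefficient magnitudes summing to $1$ in every case. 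At $\k=\pm\k_j$ the only overlapping support pairs are $(E_3,E_2)$ and $(E_5,E_4)$, whose contributions $\tfrac{1}{2}$ and $-\tfrac{1}{2}$ cancel by the sign flip on $E_5$; at the remaining $\k\in\K$, the $\K$-packing of $E_1\cup E_2\cup E_4$ together with the rectangular or parallelogram shape of $E_2$ (hence of $E_3=E_2+\k_j$, $E_4=E_2+\l_i$, $E_5=E_2+\l_i+\k_j$) rules out any other overlapping pairs, so (\ref{cond22}) holds throughout. With $g$ a Parseval generator, the multiplier hypothesis then says $hg$ is one as well, and (\ref{cond22}) for $hg$ at $\k=\k_j$, evaluated at an $\x\in E_3$ for which only the two pairs above survive, gives
$$\tfrac{d_0}{2}\Bigl[h(\x)\overline{h(\x-\k_j)}-h(\x+\l_i)\overline{h(\x+\l_i-\k_j)}\Bigr]=0.$$
This is the desired identity (with the sign choices $-\k_j$ and $+\l_i$) for almost every $\x\in E_3$. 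Because the configuration $\{E_1,\ldots,E_5\}$ can be translated rigidly throughout $\R^2$, the identity promotes to an a.e.\ statement on $\R^2$, and Lemma \ref{Lemma5} delivers the remaining sign variants.

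Case (ii), where $\k_j\in\L$ and $\k_j\neq\pm\l_i$, uses the identical template but with coefficient magnitudes $\tfrac{1}{2}$ on $E_2,E_3,E_4,E_5$: all four of these sets now lie in a single $\L$-equivalence class, so each $\L$-orbit of a point in $E_2$ meets the support four times and the squared magnitudes must sum to $1$. The same pair cancellation $(E_3,E_2)$ versus $(E_5,E_4)$ at $\k=\k_j$ yields the identity. Case (iii), $\k_j=\pm\l_i$, causes the above construction to collapse (one would have $E_3=E_4$), but the identity in this case reduces to an $\l_i$-periodicity statement for $h(\x)\overline{h(\x-\k_j)}$; this is extracted from a separate Parseval generator supported on several $\L$-translates of a fundamental domain of $\L$ that packs $\R^2$ by $\K$, with suitably chosen unimodular phases.

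The main obstacle is verifying (\ref{cond22}) for $g$ at the $\K$-elements $\k$ other than $\pm\k_j$. Because $E_3$ and $E_5$ lie outside the prescribed packing set $E_1\cup E_2\cup E_4$, the packing hypothesis alone does not preclude spurious overlaps involving $E_3$ or $E_5$ under other $\K$-translations, and the bulk of the technical work is the case analysis (using $E_3=E_2+\k_j$, $E_5=E_2+\l_i+\k_j$, the small rectangular or parallelogram shape of $E_2$, and disjointness of the $E_i$'s) needed to confirm that only the two designed pairs contribute.
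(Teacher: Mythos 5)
Your treatment of cases (i) and (ii) is essentially the paper's own proof: the same five-set generator with coefficients $\pm\sqrt{d_0/2}$ (resp.\ $\pm\sqrt{d_0}/2$ in case (ii)) and a single sign flip, the same verification of (\ref{cond12}) and (\ref{cond22}) from the tiling/packing hypotheses, the same two-term cancellation at $\k=\pm\k_j$, and the same translation-plus-Lemma~\ref{Lemma5} step to promote the identity from a single $E_m$ to all of $\R^2$. The ``spurious overlap'' difficulty you flag at the end is in fact disposed of directly by the packing hypothesis: the support of $g$ is $(E_1\cup E_2\cup E_4)\cup\big((E_2\cup E_4)+\k_j\big)$, so a $\K$-translate of a point of $E_2$ landing in the support forces $\k=\0$ or $\k=-\k_j$; no further case analysis is needed.

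Case (iii) is where your sketch has a genuine gap. A generator ``supported on several $\L$-translates of a fundamental domain with suitably chosen unimodular phases'' cannot work: writing $c_n$ for the coefficient on $\O+n\l_1$, condition (\ref{cond22}) at $\k=n^\p\l_1$ forces $\sum_n c_n\overline{c_{n+n^\p}}=0$ for every $n^\p\neq 0$, and a finitely supported sequence whose autocorrelation vanishes at all nonzero lags must be a single monomial (its generating polynomial has constant modulus on the unit circle), which yields no information about $h$; an infinite sequence of unimodular $c_n$ is not square-summable. The paper instead takes $c_n=\eta_n$ for $\eta\in\ell^2(\Z)$ whose bilateral-shift orbit is an orthonormal basis of $\ell^2(\Z)$ --- it is $\widehat{\eta}$, not $\eta$, that is unimodular. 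More importantly, the conclusion in this case does not come from a two-term cancellation: applying (\ref{cond22}) to $hg$ only shows that the sequence $\alpha_n=h(\x-n\l_1)$ is a wandering vector multiplier for the cyclic group generated by the bilateral shift, and one must then invoke the classification of such multipliers from \cite{GH2003} to conclude $\alpha_n=\lambda e^{2\pi i n\tau}$, from which $h(\x)\overline{h(\x-\l_1)}=h(\x-\l_1)\overline{h(\x-2\l_1)}$ follows. That classification is the essential ingredient missing from your outline.
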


\begin{proof}
(i) First we claim that the function $g(\x)$ defined by
$$
g(\x)=\sqrt{d_0}\chi_{_{E_1}}+\sqrt{\frac{d_0}{2}}\left(-\chi_{_{E_2}}+\chi_{_{E_3}}+\chi_{_{E_4}}+\chi_{_{E_5}}\right)
$$
is a normalized tight Gabor frame function. To see this, we only need to verify equations (\ref{cond12}) and (\ref{cond22}) for $\x\in E_1\cup E_2\cup E_3$. If $\x\in E_1$, then for any $2\le i\le 5$, $\x-\l\not\in E_i$ and $\x-\k\not\in E_i$ for any $\l\in \L$, $\k\in \K$. Hence (\ref{cond12}) and (\ref{cond22}) hold for $\x\in E_1$. If $\x\in E_2$, then $g(\x-\l)=0$ unless $\l=\0$ or $\l=-\l_i$. So $\sum_{\l\in\L}|g(\x-\l)|^2=d/2+d/2=d$. Furthermore, for $\l=\0$ and $\k\not=\0$, $g(\x-\l-\k)=g(\x-\k)\not=0$ only if $\k=-\k_j$ since $E_1\cup E_2\cup E_4$ packs $\R^2$ by $\K$, $E_3=E_2+\k_j$ and $E_5=E_4+\k_j$. Similarly, for $\l=-\l_i$ and $\k\not=\0$, $g(\x-\l-\k)=g(\x+\l_i-\k)\not=0$ only if $\k=\0$ or $\k=-\k_j$. Thus (\ref{cond22}) holds trivially for any nontrivial $\k$ such that $\k\not=-\k_j$. For $\k=-\k_j$, we have
\begin{eqnarray*}
&&\sum_{\l\in\L}g(\x-\l)\overline{g(\x-\l+\k_j)}\\
&=&g(\x)\overline{g(\x+\k_j)}+g(\x+\l_i)\overline{g(\x+\l_i+\k_j)}\\
&=&
-\sqrt{\frac{d_0}{2}}\cdot \sqrt{\frac{d_0}{2}}+\sqrt{\frac{d_0}{2}}\cdot \sqrt{\frac{d_0}{2}}=0.
\end{eqnarray*}
Similarly, (\ref{cond12}) and (\ref{cond22}) hold for any $\x\in E_3$ and any $\k\not=\0$. The details are left to the reader.

\medskip
Now, let $h$ be a functional multiplier. Since $g(\x)$ is a normalized tight Gabor frame function, so is $(hg)(\x)$. Furthermore, since for any $\y\in \R^2$, the sets $E^\p_q=E_q+\y$ ($1\le q\le 5$) also satisfy the conditions in Lemma \ref{Lemma6}(i), hence we only need to consider the case $\x\in E_2$ and (\ref{cond22}) with $g$ replaced by $hg$:
\begin{eqnarray*}
&&\sum_{\l\in\L}(hg)(\x-\l)\overline{(hg)(\x-\l+\k_j)}\\
&=&(hg)(\x)\overline{(hg)(\x+\k_j)}+(hg)(\x+\l_i)\overline{(hg)(\x+\l_i+\k_j)}\\
&=&
-\frac{d_0}{2}h(\x)\overline{h(\x+\k_j)}+\frac{d_0}{2}h(\x+\l_i)\overline{h(\x+\l_i+\k_j)}=0.
\end{eqnarray*}
It follows that $h(\x)\overline{h(\x+\k_j)}=h(\x+\l_i)\overline{h(\x+\l_i+\k_j)}$ for any $\x\in \R^2$. The other cases now follow from the second part of Lemma \ref{Lemma5} and its proof.

\medskip
(ii) We claim that the function $g(\x)$ defined by:
$$
g(\x)=\sqrt{d_0}\chi_{_{E_1}}+\frac{\sqrt{d_0}}{2}\left(-\chi_{_{E_2}}+\chi_{_{E_3}}+\chi_{_{E_4}}+\chi_{_{E_5}}\right).
$$
is a normalized tight Gabor frame function and proceed to verify that (\ref{cond12}) and (\ref{cond22}) hold for any $\x\in E_1\cup E_2$.
Notice that if $\x\in E_1$, then (\ref{cond12}) and (\ref{cond22}) hold trivially since $(E_1+\l)\cap E_m=(E_1+\k)\cap E_m=\emptyset$ for any nontrivial $\l\in \L$ and $\k\in \K$ and $2\le m\le 5$ by the given condition. On the other hand, if $\x\in E_2$, then the summation in (\ref{cond12}) contains exactly four terms corresponding to $\l=\0,-\l_i,-\k_j$ and $\l=-\l_i-\k_j$ (keep in mind that $\k_j\in \L$ in this case) and each term equals $d_0/4$, hence (\ref{cond12}) holds. This also means that we only need to consider the four terms in the summation of (\ref{cond22}) corresponding to these $\l$ vectors. Keep in mind that $\k\not=\0$ in (\ref{cond22}) and $\x\in E_2$. Thus for $\l=\0$, $g(\x-\k)\not=0$ only if $\k=-\k_j$ since $E_3=E_2+\k_j$, $E_1\cup E_2\cup E_4$ packs $\R^2$ by $\K$ and $E_5=E_4+\k_j$; Similarly, for $\l=-\l_i$, $\x-\l=\x+\l_i\in E_4$, hence $g(\x+\l_i-\k)\not=0$ only if $\k=-\k_j$ (since $E_2+\l_i=E_4$ and $E_5=E_4+\k_j$); for $\l=-\k_j$, $\x-\l=\x+\k_j\in E_3$ hence $g(\x+\l_i-\k)\not=0$ only if $\k=\k_j$; for $\l=-\l_i-\k_j$, $\x-\l=\x+\l_i+\k_j\in E_5$ hence $g(\x+\l_i-\k)\not=0$ also only if $\k=\k_j$. Combining the above, we see that (\ref{cond22}) holds trivially if $\k\not=\pm \k_j$. For $\k=-\k_j$, we have
\begin{eqnarray*}
&&\sum_{\l\in\L}g(\x-\l)\overline{g(\x-\l+\k_j)}\\
&=&g(\x)\overline{g(\x+\k_j)}+g(\x+\l_i)\overline{g(\x+\l_i+\k_j)}\\
&=&
-\sqrt{\frac{d_0}{2}}\cdot \sqrt{\frac{d_0}{2}}+\sqrt{\frac{d_0}{2}}\cdot \sqrt{\frac{d_0}{2}}=0.
\end{eqnarray*}
Similarly, for
$\k=\k_j$, we have
\begin{eqnarray*}
&&\sum_{\l\in\L}g(\x-\l)\overline{g(\x-\l+\k_j)}\\
&=&g(\x+\k_j)\overline{g(\x+\k_j-\k_j)}+g(\x+\l_i+\k_j)\overline{g(\x+\l_i+\k_j-\k_j)}\\
&=&
-\sqrt{\frac{d_0}{2}}\cdot \sqrt{\frac{d_0}{2}}+\sqrt{\frac{d_0}{2}}\cdot \sqrt{\frac{d_0}{2}}=0.
\end{eqnarray*}
The rest of the proof is similar to (i) above and is left to the reader.

\medskip
(iii) WLOG let us assume that $\l_1=\k_1$ since the other cases are analogous. Let $\O$ be a measurable set that tiles $\R^2$ by $\L$ and packs $\R^2$ by $\K$ (the existence of such a set is proven in \cite{HW2001}). Following \cite{GH2003}, let $U$ be the bilateral-shift unitary operator on $\ell^2(\Z)$ and let $\eta=\{\eta_n\}$ be any element in $\ell^2(\Z)$ such that $\{U^k\eta: k\in \Z\}$ is an orthonormal basis of $\ell^2(\Z)$.
Define
$$
g(\x)=\left\{
\begin{array}{ll}
\sqrt{d_0}\eta_n &{\rm if}\ \x+n\l_1\in \O;\\
0 &{\rm otherwise}.
\end{array}
\right.
$$
Notice that we only need to verify (\ref{cond12}) and (\ref{cond22}) for any $\x\in \O$ since $\Omega$ tiles $\R^2$ by $\L$. Thus for any $\x\in \O$, $g(\x-\l)=0$ unless $\l=n\l_1$ for some integer $n$. It follows that
$$
\sum_{\l\in\L}|g(\x-\l)|^2=\sum_{n\in \Z}|g(\x-n\l_1)|^2=d_0\sum_{n\in \Z}|\eta_n|^2=d_0.
$$
On the other hand, $(\O+n\l_1)\cap(\O+\k)=(\O+n\k_1)\cap(\O+\k)=\emptyset$ for any $n\in \Z$ unless $\k=n\l_1=n\k_1$ since
$\O$ packs $\R^2$ by $\K$, thus
$\sum_{\l\in\L}g(\x-\l)\overline{g(\x-\l-\k)}=0$ if $\k\not=n^\p\l_1$ for any $n^\p$. Finally, if $\k=n^\p\l_1$ for some $n^\p\not=0$, then
$$
\sum_{\l\in\L}g(\x-\l)\overline{g(\x-\l-\k)}=\sum_{n\in\Z}g(\x-n\l_1)\overline{g(\x-n\l_1-n^\p\l_1)}=\sum_{n\in\Z}\eta_n\overline{\eta_{n+n^\p}}=d_0\delta_{0,n^\p}=0.
$$
This proves that $g$ is a normalized tight Gabor frame function. Let $h$ be a functional multiplier and apply (\ref{cond22}) to $hg$, for $\k=n^\p\k_1=n^\p\l_1$ and any $\x\in \O$, define $\alpha_n=h(\x-n\l_1)$, then we have
\begin{eqnarray*}
&&\sum_{\l\in\L}(hg)(\x-\l)\overline{(hg)(\x-\l-\k)}\\
&=&\sum_{n\in\Z}(hg)(\x-n\l_1)\overline{(hg)(\x-n\l_1-n^\p\l_1)}\\
&=&\sum_{n\in\Z}\alpha_n\eta_n\overline{\alpha_{n+n^\p}\eta_{n+n^\p}}\\
&=&\delta_{0,n^\p}=0
\end{eqnarray*}
for any $n^\p\not=0$.
Thus $M_{\alpha}$ with $\alpha=\{\alpha_n\}$ is a wandering vector multiplier for the group $\{U^n:\ n\in \Z\}$. By \cite{GH2003} we have
$$
\alpha_n=h(\x-n\l_1)=\lambda e^{2\pi i n\tau}
$$
for some constants $\lambda$ (with $|\lambda|=1$) and $\tau$ that depend only on $\x$. Thus
$$
h(\x)\overline{h(\x-\k_1)}=h(\x)\overline{h(\x-\l_1)}=\lambda\cdot \overline{\lambda}e^{-2\pi i \tau}=e^{-2\pi i \tau}
$$
and
$$
h(\x-\l_1)\overline{h(\x-\l_1-\k_1)}=\lambda e^{2\pi i \tau}\cdot \overline{\lambda}e^{-4\pi i \tau}=e^{-2\pi i \tau}.
$$
That is, $ h(\x)\overline{h(\x-\k_1)}= h(\x-\l_1)\overline{h(\x-\l_1-\k_1)}$ as desired.
\end{proof}

\medskip
The following lemma is a slight variation to cases (i) and (ii) of Lemma \ref{Lemma6}, which will also be needed later in the proof of our main result.

\medskip
\begin{lemma}\label{Lemma7}
Let $h$ be a functional Gabor frame multiplier and let $\{\l_i, \k_j\}$ be a given pair with $1\le i,j\le 2$,  then $h(\x)\overline{h(\x\pm \k_j)}=h(\x\pm \l_i)\overline{h(\x\pm \l_i\pm \k_j)}$ for any $\x\in \R^2$ if $h(\x)\overline{h(\x\pm \k_j)}=h(\x\pm \l_{i^\p})\overline{h(\x\pm \l_{i^\p}\pm \k_j)}$ for any $\x\in \R^2$ where $1\le {i^\p}\le 2$ and $i+{i^\p}\equiv 1$ mod(2), and one of the following two conditions hold:\\
\noindent
(i) there exist disjoint and measurable sets $E_1$, $E_2$, $E_3$, $E_4$ and $E_5$, with $E_2$ being either a rectangle or a parallelogram, such that $E_3=E_2+\k_j$, $E_4=E_2+\l_i+\l_{i^\p}$, $E_5=E_3+\l_i+\l_{i^\p}$, and $E_1\cup E_2\cup E_3$ tiles $\R^2$ by $\L$ while $E_1\cup E_2\cup E_4$ packs $\R^2$ by $\K$;\\
\noindent
(ii) $\k_j\in \L$, $\k_j\not=\pm \l_i$ and there exist disjoint and measurable sets $E_1$, $E_2$, $E_3$, $E_4$ and $E_5$, with $E_2$ being either a rectangle or a parallelogram, such that $E_3=E_2+\k_j$, $E_4=E_2+\l_{i^\p}+\l_i$, and $E_5=E_3+\l_{i^\p}+\l_i(=E_4+\k_j)$, $E_1\cup E_2$ tiles $\R^2$ by $\L$ and $E_1\cup E_2\cup E_4$ packs $\R^2$ by $\K$.
\end{lemma}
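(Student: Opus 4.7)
The plan is to follow the proofs of parts (i) and (ii) of Lemma~\ref{Lemma6} almost verbatim, using the same test function
$$
g(\x)=\sqrt{d_0}\chi_{_{E_1}}+\sqrt{d_0/2}\bigl(-\chi_{_{E_2}}+\chi_{_{E_3}}+\chi_{_{E_4}}+\chi_{_{E_5}}\bigr)
$$
for case (i), and its analogue with $\sqrt{d_0}/2$ in front of the parenthesis for case (ii). The single new feature is that the $\L$-translate linking $E_2$ to $E_4$ is now $\l_i+\l_{i^\p}$ rather than $\l_i$; accordingly, after verifying that $g$ is a normalized tight Gabor frame function and substituting $hg$ into (\ref{cond22}) at $\x\in E_2$ with $\k=-\k_j$, the identity that pops out is
$$
h(\x)\overline{h(\x+\k_j)}=h(\x+\l_i+\l_{i^\p})\overline{h(\x+\l_i+\l_{i^\p}+\k_j)}.
$$
Feeding this into the lemma's hypothesis applied at $\y=\x+\l_i+\l_{i^\p}$ with shift $-\l_{i^\p}$ rewrites the right-hand side as $h(\x+\l_i)\overline{h(\x+\l_i+\k_j)}$, which is exactly the desired identity. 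Translating the entire configuration $\{E_q\}$ to reach arbitrary $\x\in\R^2$ and invoking Lemma~\ref{Lemma5} to handle the remaining sign combinations completes the argument.

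For the verification that $g$ is a normalized tight Gabor frame function, I would re-run the case analysis from Lemma~\ref{Lemma6} at $\x\in E_2$: the only nonzero $\L$-contributions to (\ref{cond12}) come from $\l=\0$ and $\l=-\l_i-\l_{i^\p}$, each contributing $d_0/2$; in (\ref{cond22}) with $\k\ne\0$, all potentially non-vanishing terms are killed by the $\K$-packing of $E_1\cup E_2\cup E_4$ together with the $\L$-tiling of $E_1\cup E_2\cup E_3$, except the two terms at $\k=-\k_j$ which cancel against each other by the $-/+$ sign pattern on $E_2$ and $E_4$. The verification on $E_1$ and $E_3$ is routine, and case (ii) is handled by the same pattern applied to the Lemma~\ref{Lemma6}(ii) construction, with the extra contributions from $\l\in\{\0,-\l_i-\l_{i^\p},-\k_j,-\l_i-\l_{i^\p}-\k_j\}$ pairing up correctly because $\k_j\in\L$ in that case.

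The main obstacle is precisely this bookkeeping: with the longer shift $\l_i+\l_{i^\p}$, one must confirm that the cross-terms potentially arising from $E_5=E_4+\k_j$ do not introduce any new non-cancelling contribution to (\ref{cond22}). These are controlled by the identity $E_2\cap(E_4+\mathbf{m})=\emptyset$ for every $\mathbf{m}\in\K$, which follows from the unchanged $\K$-packing hypothesis, together with the fact that in case (i) the tiling condition on $E_1\cup E_2\cup E_3$ with $E_3=E_2+\k_j$ implicitly forces $\k_j\notin\L$, so no stray $\L$-route mixes $E_2$ with $E_3$ or $E_5$. Since none of these packing and tiling properties depend on whether the internal shift is $\l_i$ or $\l_i+\l_{i^\p}$, the bookkeeping of Lemma~\ref{Lemma6} transfers with only cosmetic changes, and the only genuinely new work is the one-line application of the hypothesis at the end.
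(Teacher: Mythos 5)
Your proposal is correct and follows essentially the same route as the paper: both use the Lemma~\ref{Lemma6} test functions with $E_4$ shifted by $\l_i+\l_{i^\p}$ to obtain $h(\x)\overline{h(\x+\k_j)}=h(\x+\l_i+\l_{i^\p})\overline{h(\x+\l_i+\l_{i^\p}+\k_j)}$, and then cancel the $\l_{i^\p}$ shift using the hypothesized identity for the pair $\{\l_{i^\p},\k_j\}$ before finishing with Lemma~\ref{Lemma5}. Your bookkeeping observations (the forced $\k_j\notin\L$ in case (i), and the four-term pairing in case (ii)) are accurate and consistent with the paper's argument.
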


\begin{proof}
(i) Similar to the proof of Lemma \ref{Lemma6}(i), using the fact that the function
$$
g(\x)=\sqrt{d_0}\chi_{_{E_1}}+\sqrt{\frac{d_0}{2}}\left(-\chi_{_{E_2}}+\chi_{_{E_3}}+\chi_{_{E_4}}+\chi_{_{E_5}}\right)
$$
is a normalized tight Gabor frame function, we derive  $h(\x)\overline{h(\x+\k_j)}=h(\x+\l_{i^\p}+\l_i)\overline{h(\x+\l_{i^\p}+\l_i+\k_j)}$ for any $\x\in \R^2$. Substituting $\x+\l_{i^\p}$ by $\x$ on both sides of this, we obtain $h(\x-\l_{i^\p})\overline{h(\x-\l_{i^\p}+\k_j)}=h(\x+\l_i)\overline{h(\x+\l_i+\k_j)}$.
Since $h(\x)\overline{h(\x\pm \k_j)}=h(\x\pm \l_{i^\p})\overline{h(\x\pm \l_{i^\p}\pm \k_j)}$ for any $\x\in \R^2$, this leads to $h(\x)\overline{h(\x+ \k_j)}=h(\x+\l_i)\overline{h(\x+\l_i+\k_j)}$ for any $\x\in \R^2$, hence $h(\x)\overline{h(\x\pm \k_j)}=h(\x\pm \l_i)\overline{h(\x\pm \l_i\pm \k_j)}$ for any $\x\in \R^2$ by Lemma \ref{Lemma5}.

\medskip
(ii) The proof is similar to the above the case (ii) of Lemma \ref{Lemma6} using the function
$$
g(\x)=\sqrt{d_0}\chi_{_{E_1}}+\frac{\sqrt{d_0}}{2}\left(-\chi_{_{E_2}}+\chi_{_{E_3}}+\chi_{_{E_4}}+\chi_{_{E_5}}\right).
$$
At the end we also reach the conclusion $h(\x)\overline{h(\x+\k_j)}=h(\x+\l_{i^\p}+\l_i)\overline{h(\x+\l_{i^\p}+\l_i+\k_j)}$ for any $\x\in \R^2$, which leads to $h(\x)\overline{h(\x+ \k_j)}=h(\x+\l_i)\overline{h(\x+\l_i+\k_j)}$ for any $\x\in \R^2$,  hence $h(\x)\overline{h(\x\pm \k_j)}=h(\x\pm \l_i)\overline{h(\x\pm \l_i\pm \k_j)}$ for any $\x\in \R^2$ by Lemma \ref{Lemma5}.
\end{proof}

\medskip
\begin{remark}\label{Remark7}{\em
From the proofs of Lemmas \ref{Lemma5}, \ref{Lemma6} and \ref{Lemma7} it is easy to see that the results of these lemmas hold if $\l_1$, $\l_2$, $\k_1$ and $\k_2$ are replaced by any $\l_1^\p$, $\l_2^\p\in \L$, $\k_1^\p$, $\k_2^\p\in\K$ so long as $\{\l_1^\p,\l_2^\p\}$ is a basis for $\L$ and $\{\k_1^\p,\k_2^\p\}$  is a basis for $\K$ (with $\L$ and $\K$  treated as modules over $\Z$).}
\end{remark}

\medskip
\section{The tiling of $\R^2$ by $\L$ and $\K$}\label{Tiling}

In order to apply the results from the last section such as Lemmas \ref{Lemma6} and \ref{Lemma7}, we need to know how to construct the various sets satisfying the conditions called for by these lemmas. For this we will rely on Theorem 1.2 of \cite{HW2001} and the results leading to the proof of it. We shall state these results in a slightly modified way to match the setting and the terminology we have adopted in this paper.

Define $\M=\{\l+\k:\ \l\in \L, \k\in\K\}$, $\M_1=\{m\l_1+n\k_1:\ m, n\in \Z\}$ and $\M_2=\{m\l_2+n\k_2:\ m, n\in \Z\}$.
We have the following lemmas, which can be obtained by modifying Lemma 2.2 and its proof, as well as the proof of Theorem 1.2 in \cite{HW2001}. Due to the scope of this paper, we shall omit their proofs and refer our reader to  \cite{HW2001}.

\begin{lemma}\label{newlemma1}\cite{HW2001}
If $\M$ is dense in $\R^2$, then for any $S_0=\cupdot_{1\le j\le p}C_j$ and $R_0=\cupdot_{1\le j\le p}C_j^\p$, where the $C_j$, $C_j^\p$'s are polytopes in $\R^2$, such that $\mu(C_j)= \mu(C_j^\p)$ for each $j$ (where $\mu$ stands for the Lebesgue measure), $S_0$ tiles $\R^2$ by $\L$ and $R_0$ packs $\R^2$ by $\K$, there exists a measurable set $C_j^\pp$  for each $j$ such that $C_j^\pp$ is $\L$ equivalent to $C_j$ and $\K$ equivalent to $C_j^\p$. Consequently, $\O=\cupdot_{1\le j\le p}C_j^\pp$ tiles $\R^2$ by $\L$ and packs $\R^2$ by $\K$. Furthermore, in the case $C_j= C_j^\p$, we may simply choose $C_j^\pp=C_j=C_j^\p$.
\end{lemma}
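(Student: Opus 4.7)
The plan is to reduce to a single pair $(C_j, C_j^\p)$ at a time and then apply a cut-and-paste argument powered by the density of $\M$ in $\R^2$, in the same spirit as the proof of Lemma 2.2 of \cite{HW2001}. First, the ``furthermore'' clause is essentially free: whenever $C_j = C_j^\p$, the polytope itself is trivially $\L$-equivalent to $C_j$ and $\K$-equivalent to $C_j^\p$, so we may set $C_j^\pp = C_j$. The substantive work lies in the case $C_j \ne C_j^\p$.

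Next I would process the indices $j = 1, 2, \ldots, p$ one at a time, arguing that it suffices to build each $C_j^\pp$ satisfying the two equivalences provided we also maintain that $C_j^\pp$ is $\L$-disjoint and $\K$-disjoint from each previously constructed $C_i^\pp$ ($i < j$). Once this is achieved, $\O = \cupdot_{1 \le j \le p} C_j^\pp$ is $\L$-equivalent to $\cupdot_j C_j = S_0$, hence tiles $\R^2$ by $\L$, and is $\K$-equivalent to $\cupdot_j C_j^\p = R_0$, hence packs $\R^2$ by $\K$, giving the consequence in the statement.

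For the core construction I would initialize $C_j^{(0)} := C_j$ (trivially $\L$-equivalent to $C_j$) and iteratively modify it towards $\K$-equivalence with $C_j^\p$. At stage $n$, identify the ``mismatch'' subset of $C_j^{(n)}$ whose $\K$-orbit fails to cover the corresponding portion of $C_j^\p$; because $\mu(C_j^{(n)}) = \mu(C_j^\p) = \mu(C_j)$ is preserved throughout, the unmatched region on the $C_j^\p$ side has the same measure. Since $\M = \L + \K$ is dense in $\R^2$, for every $\eps > 0$ there exist $(\l,\k) \in \L \times \K$ with $\l + \k$ arbitrarily close to any prescribed vector, so one can transport a small slice of the mismatch by an $\L$-translation (preserving $\L$-equivalence with $C_j$) and then record the result at the $\K$-translate of its target location in $C_j^\p$ (preserving $\K$-equivalence with $C_j^\p$). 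A Vitali-type exhaustion, covering almost all of the mismatch by such small slices, drives the mismatch measure to zero, and the measurable limit $C_j^\pp$ satisfies both equivalences.

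The main obstacle I anticipate is the simultaneous bookkeeping across $j$. At stage $j$, every $\L$- and $\K$-translation used in the construction must avoid the $\L$- and $\K$-translates of the already-fixed $C_1^\pp, \ldots, C_{j-1}^\pp$, for otherwise the combined set would fail to be $\L$-packing or $\K$-packing. These exclusions amount to removing countably many affine cosets, so $\M$ remains dense in their complement and the exhaustion still goes through; making this measure-theoretic limit rigorous, while preserving both the $\L$-equivalence with $C_j$ and the $\K$-equivalence with $C_j^\p$ at every stage, is exactly the technical heart of the Han--Wang argument. My proposal is to import that argument essentially verbatim, adjusted only to match the notation and the $d=2$ setting of the present paper.
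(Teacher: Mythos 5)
The paper does not actually prove this lemma --- it explicitly defers to Lemma 2.2 and the proof of Theorem 1.2 of \cite{HW2001} --- and your proposal is precisely that Han--Wang density-driven cut-and-paste exhaustion argument carried out pair by pair, so it matches the intended proof. One remark: the cross-index bookkeeping you identify as the main obstacle is in fact automatic and needs no exclusions --- since the $C_j$ are mutually $\L$-disjoint ($S_0$ tiles by $\L$) and the $C_j^\p$ are mutually $\K$-disjoint ($R_0$ packs by $\K$), any sets $C_i^\pp$, $C_j^\pp$ that are $\L$-equivalent to $C_i$, $C_j$ and $\K$-equivalent to $C_i^\p$, $C_j^\p$ are already forced to be pairwise disjoint and to assemble into a set that tiles $\R^2$ by $\L$ and packs $\R^2$ by $\K$.
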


\begin{lemma}\label{newlemma2}\cite{HW2001}
If $\k_1=\pm \frac{m_1}{n_1}\l_1$, $\k_2=\pm\frac{m_2}{n_2}\l_2$ with $0<m_1<n_1$, $m_1$, $n_1$ coprime, and $m_2>n_2$, $m_2$, $n_2$ coprime, $\frac{m_1m_2}{n_1n_2}\ge 1$, then for any
 $S_0=\cupdot_{1\le j\le p}C_j$ and $R_0=\cupdot_{1\le j\le p}C_j^\p$, where the $C_j$, $C_j^\p$'s are rectangles of the form $C_j=[\frac{a_j}{n_1},\frac{a_j+1}{n_1})\times [\frac{b_j}{n_2},\frac{b_j+1}{n_2})$ and $C_j^\p=[\frac{a^\p_j}{n_1},\frac{a^\p_j+1}{n_1})\times [\frac{b^\p_j}{n_2},\frac{b^\p_j+1}{n_2})$ such that $S_0$ tiles $\R^2$ by $\L$ and $R_0$ packs $\R^2$ by $\K$, then for each $j$ there exists a rectangle $C_j^\pp$ such that $C_j^\pp$ is $\L$ equivalent to $C_j$ and $\K$ equivalent to $C_j^\p$. Consequently, $\O=\cupdot_{1\le j\le p}C_j^\pp$ tiles $\R^2$ by $\L$ and packs $\R^2$ by $\K$. Notice that in this case it is necessary that $n_1n_2=p$. Furthermore, we can choose $C_j^\pp=C_j=C_j^\p$ if $C_j=C_j^\p$.
\end{lemma}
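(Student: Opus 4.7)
The plan is to build each rectangle $C_j^\pp$ explicitly as a single $\L$-translate of $C_j$ that at the same time is a single $\K$-translate of $C_j^\p$; the tiling and packing assertions will then fall out of the resulting class correspondences. The first step is to observe that the condition ``$C_j^\pp$ is a rectangle and is $\L$-equivalent to $C_j$'' is in fact equivalent to ``$C_j^\pp = C_j + \l$ for a single $\l \in \L = \Z^2$.'' Indeed, $C_j$ has dimensions $\frac{1}{n_1} \times \frac{1}{n_2} \le 1 \times 1$, so it projects injectively to $\R^2/\L$; any rectangle $C_j^\pp$ of the same dimensions that is $\L$-equivalent to $C_j$ must share this image in $\R^2/\L$ (by equality of areas), which forces its lower-left corner to differ from that of $C_j$ by an element of $\Z^2$. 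The analogous argument, using $\frac{1}{n_i} \le \frac{m_i}{n_i}$ (valid since $m_i \ge 1$) to guarantee that a single $\frac{1}{n_1} \times \frac{1}{n_2}$ rectangle projects injectively to $\R^2/\K$, shows that $\K$-equivalence to $C_j^\p$ forces $C_j^\pp = C_j^\p + \k$ for a single $\k \in \K$. So the task reduces to solving, for each $j$, the equation $C_j + \l = C_j^\p + \k$ with $\l \in \L$ and $\k \in \K$.

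Writing $\l = (u, v) \in \Z^2$ and $\k = \bigl( \tfrac{m m_1}{n_1}, \tfrac{n m_2}{n_2} \bigr)$ (the $\pm$ signs from $\k_i = \pm \tfrac{m_i}{n_i} \l_i$ can be absorbed into $m, n$) and equating lower-left corners gives $a_j + u n_1 = a_j^\p + m m_1$ and $b_j + v n_2 = b_j^\p + n m_2$, which reduce modulo $n_1$ and $n_2$ to the two independent congruences $a_j - a_j^\p \equiv m\, m_1 \pmod{n_1}$ and $b_j - b_j^\p \equiv n\, m_2 \pmod{n_2}$. Since $\gcd(m_1, n_1) = 1$ and $\gcd(m_2, n_2) = 1$, Bezout's identity produces integers $m, n$ solving these; the required $u, v$ are then read off, and $C_j^\pp := C_j + (u, v)$ is the desired rectangle. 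The trivial case $C_j = C_j^\p$ is handled by $m = n = u = v = 0$, giving $C_j^\pp = C_j = C_j^\p$.

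Finally, let $\pi_\L : \R^2 \to \R^2/\L$ and $\pi_\K : \R^2 \to \R^2/\K$ denote the quotient maps. By construction $\pi_\L(C_j^\pp) = \pi_\L(C_j)$ and $\pi_\K(C_j^\pp) = \pi_\K(C_j^\p)$ for every $j$. The hypothesis that $S_0 = \cupdot_j C_j$ tiles $\R^2$ by $\L$ says that $\pi_\L|_{S_0}$ is a bijection (modulo null sets) onto $\R^2/\L$; together with the equality of class images and the injectivity of $\pi_\L$ on each individual $C_j^\pp$, this transfers to $\pi_\L|_{\O}$ being a bijection with $\O := \cupdot_j C_j^\pp$, so $\O$ tiles $\R^2$ by $\L$. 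Similarly, $R_0 = \cupdot_j C_j^\p$ packing $\R^2$ by $\K$ makes $\pi_\K|_{R_0}$ injective, which transfers to $\pi_\K|_{\O}$, so $\O$ packs $\R^2$ by $\K$. The only step demanding genuine work is the Bezout computation producing $\l$ and $\k$ with $C_j + \l = C_j^\p + \k$; the coprimality hypotheses $\gcd(m_i, n_i) = 1$ and the coordinate-decoupled form of $\K$ make this routine and preclude any cross-coordinate conflict that might otherwise arise.
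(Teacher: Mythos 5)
Your proof is correct. The paper itself does not prove this lemma --- it is quoted from Han and Wang \cite{HW2001}, with the remark that it follows by modifying Lemma 2.2 and the proof of Theorem 1.2 there --- so there is no in-paper argument to compare against; your construction of a common translate $C_j^\pp=C_j+\l=C_j^\p+\k$ via the decoupled congruences $a_j-a_j^\p\equiv mm_1 \pmod{n_1}$ and $b_j-b_j^\p\equiv nm_2\pmod{n_2}$, solvable by Bezout because $\gcd(m_i,n_i)=1$, is exactly the standard argument for this rational diagonal case and is surely what the citation intends. Two points are left implicit but follow immediately: the pairwise disjointness of the $C_j^\pp$ (needed for the $\cupdot$ in the conclusion), which holds because the $C_j$ are mutually $\L$-disjoint and each $C_j^\pp$ is a single $\L$-translate of $C_j$; and the parenthetical claim $p=n_1n_2$, which is just the area count $1=\mu(S_0)=p/(n_1n_2)$. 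Finally, your opening paragraph --- showing that being a rectangle and $\L$-equivalent to $C_j$ \emph{forces} $C_j^\pp$ to be a single translate --- is correct but not needed for the existence statement; only the easy converse, that a common translate automatically carries both required equivalences and packs by both lattices (using $1/n_i\le m_i/n_i$), is used.
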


\begin{lemma}\label{newlemma3}\cite{HW2001}
If $\k_1=\pm\frac{m_1}{n_1}\l_1$, $\k_2=r_2\l_2$ with $m_1$, $n_1$ coprime, and $r_2$ irrational (in this case $\M_2$ is a dense subset of $\{r\l_2:\ r\in\R\}$), then for any
 $S_0=\cupdot_{1\le j\le p}C_j$ and $R_0=\cupdot_{1\le j\le p}C_j^\p$, where the $C_j$, $C_j^\p$'s are rectangles of the form $C_j=[\frac{a_j}{n_1},\frac{a_j+1}{n_1})\times [{b_j},{c_j})$ and $C_j^\p=[\frac{a^\p_j}{n_1},\frac{a^\p_j+1}{n_1})\times [{b^\p_j},{c^\p_j})$ such that $c_j-b_j=c_j^\p-b_j^\p$,  $S_0$ tiles $\R^2$ by $\L$ and $R_0$ packs $\R^2$ by $\K$, then for each $j$ there exists a measurable set $C_j^\pp$ such that $C_j^\pp$ is $\L$ equivalent to $C_j$ and $\K$ equivalent to $C_j^\p$. Consequently, $\O=\cupdot_{1\le j\le p}C_j^\pp$ tiles $\R^2$ by $\L$ and packs $\R^2$ by $\K$. Again we can choose $C_j^\pp=C_j=C_j^\p$ if $C_j=C_j^\p$. A similar statement holds if $\k_1=r_1\l_1$, $\k_2=\pm\frac{m_2}{n_2}\l_2$ with $r_1$ irrational, and $m_2$, $n_2$ coprime.
\end{lemma}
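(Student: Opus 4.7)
The plan is to adapt the rearrangement technique of \cite{HW2001}, building each $C_j^{\prime\prime}$ as a product of an $x$-interval and a $y$-set. The rational $x$-direction is handled by the Chinese Remainder Theorem: since $\L$-translations shift $x$ by arbitrary integers and $\K$-translations shift $x$ by arbitrary multiples of $m_1/n_1$, and $\gcd(m_1,n_1)=1$, there exists an integer $a_j^{\prime\prime}$ with $a_j^{\prime\prime}\equiv a_j\pmod{n_1}$ and $a_j^{\prime\prime}\equiv a_j^{\prime}\pmod{m_1}$. Thus $[a_j^{\prime\prime}/n_1,(a_j^{\prime\prime}+1)/n_1)$ is simultaneously an $\L$-translate of $[a_j/n_1,(a_j+1)/n_1)$ and a $\K$-translate of $[a_j^{\prime}/n_1,(a_j^{\prime}+1)/n_1)$. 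In the special case $C_j=C_j^{\prime}$, take $a_j^{\prime\prime}=a_j=a_j^{\prime}$.

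The core of the argument is the irrational $y$-direction. Write $I=[b_j,c_j)$ and $I^{\prime}=[b_j^{\prime},c_j^{\prime})$ of common length $L$, noting that the $\L$-packing of $C_j$ forces $L\le 1$ and the $\K$-packing of $C_j^{\prime}$ forces $L\le|r_2|$. Since $r_2$ is irrational, $G:=\Z+r_2\Z$ is a countable dense subgroup of $\R$; enumerate it as $\{g_k=n_k+m_kr_2\}_{k\ge 1}$. Run a greedy extraction: $A_0=I$, $B_0=I^{\prime}$, and for $k\ge 1$ let $J_k=A_{k-1}\cap(B_{k-1}+g_k)$, $A_k=A_{k-1}\setminus J_k$, $B_k=B_{k-1}\setminus(J_k-g_k)$, so that $|A_k|=|B_k|$ is preserved. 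The key claim is that $\mu(A_\infty)=0$ for $A_\infty=\bigcap_kA_k$: were $\mu(A_\infty)>0$, the continuous convolution
\[
h(\tau)=\mu\bigl(A_\infty\cap(B_\infty+\tau)\bigr)
\]
would have integral $\mu(A_\infty)\mu(B_\infty)>0$ and hence be strictly positive on an open subset of $\R$, which by density of $G$ must contain some $g_k$; but then $A_\infty\cap(B_\infty+g_k)\subseteq J_k$ has positive measure, contradicting $A_\infty\cap J_k=\emptyset$. Granting this, define $F_j=\bigsqcup_k(J_k-n_k)$; each piece $J_k-n_k$ is an integer shift of $J_k\subseteq I$ and, since $J_k-n_k=(J_k-g_k)+m_kr_2$, an $r_2\Z$-shift of $J_k-g_k\subseteq I^{\prime}$. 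The diameter bounds $L\le 1$ and $L\le|r_2|$ force the pieces $J_k-n_k$ to be pairwise disjoint and make $F_j$ pack $\R$ by both $\Z$ and by $r_2\Z$; for instance, two points $y_1-n_{k_1}$ and $y_2-n_{k_2}$ whose difference is an integer $\ell$ satisfy $|y_1-y_2|=|\ell|$, which together with $|y_1-y_2|<L\le 1$ forces $\ell=0$, hence $y_1=y_2$ and then $k_1=k_2$ by disjointness of the $J_k$'s. The $r_2\Z$ case is analogous using injectivity of the partial map $y\mapsto y-g_k$ from $I$ to $I^{\prime}$.

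Setting $C_j^{\prime\prime}=[a_j^{\prime\prime}/n_1,(a_j^{\prime\prime}+1)/n_1)\times F_j$ thus gives a measurable set $\L$-equivalent to $C_j$ and $\K$-equivalent to $C_j^{\prime}$. For the ``consequently'' clause: the $\L$-orbits met by $C_j^{\prime\prime}$ coincide with those met by $C_j$, and the $\L$-orbits of distinct $C_j$'s are disjoint because $S_0$ packs $\L$, so the $C_j^{\prime\prime}$'s are themselves pairwise disjoint with the same $\L$-saturation as $S_0$, namely $\R^2$; thus $\O=\cupdot_{j}C_j^{\prime\prime}$ tiles $\R^2$ by $\L$, and the packing of $\O$ by $\K$ follows from the analogous fact applied to $R_0$ and $\K$. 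The symmetric statement comes out by exchanging the roles of the two coordinates. The principal obstacle in this plan is the $\mu(A_\infty)=0$ step, which requires combining the density of $\Z+r_2\Z$ with the continuity of the convolution $h(\tau)$, together with a careful verification that the size constraints $L\le\min(1,|r_2|)$ yield the required simultaneous $\Z$- and $r_2\Z$-packing of $F_j$.
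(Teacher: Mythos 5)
The paper does not actually prove this lemma---it is quoted from \cite{HW2001} with the proof explicitly omitted---and your argument is a correct, self-contained reconstruction of the Han--Wang argument that the citation points to: the Chinese Remainder Theorem in the rational coordinate, and in the irrational coordinate a greedy exhaustion over the dense subgroup $\Z+r_2\Z$ combined with continuity of the convolution $h(\tau)$ to show the residual set $A_\infty$ is null, with the length bounds $L\le\min(1,|r_2|)$ giving the simultaneous packing of $F_j$. The only point left implicit is the final claim that one may take $C_j''=C_j$ when $C_j=C_j'$: for the $y$-coordinate this needs $g_1=0$ to be placed first in the enumeration of $\Z+r_2\Z$, so that $J_1=I$ and hence $F_j=I$.
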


\medskip
\begin{remark}\label{Remark2.7}{\em
The above lemmas, combined with Lemmas \ref{Lemma6} and  \ref{Lemma7}, can greatly simplify our proof of the main theorem. For example, if $\O$ can be constructed in such a way that it contains a rectangle (or parallelogram) $E_2$ and a measurable set $E_0$ such that $E_0$ is disjoint from $E_2$ and  is $\L$ equivalent to $E_2+\k_j$ and $\K$ equivalent to $E_2+\l_i$ ($1\le i\le 2$ and $1\le j\le 2$), then Lemma \ref{Lemma6}(i) holds for the pair $(\l_i,\k_j)$ with $E_1=\O\setminus (E_2\cup E_0)$, $E_2$, $E_3=E_2+\k_j$, $E_4=E_2+\l_i$ and $E_5=E_2+\l_i+\k_j$.
}
\end{remark}

We now proceed to prove  (\ref{special_form}) with $\L=\Z^2$ and $\K=D\L$. Due to the many different cases of the matrix $D$, we will divide our proof into three parts. In the next part, we will consider the case when the eigen values of $D$ are real and have absolute values at least one.  In Section \ref{Sec_Proof2} we will consider the case when one of the eigen values of $D$ has absolute value less than one. The case when the eigen values of $D$ are complex is last considered in Section \ref{Proof3}.
\section{The proof of the main result: Part 1}\label{Sec_Proof}

When the eigen values of $D$ are real and have absolute values at least one, it can be divided into the following types:

\noindent
Type I: $D=\begin{bmatrix}\pm1&0\\0&r\end{bmatrix}$ or $D=\begin{bmatrix}r&0\\0&\pm 1\end{bmatrix}$ with $|r|>1$;\\
Type II: $D=\begin{bmatrix}r_1&0\\0&r_2\end{bmatrix}$ with $|r_1|>1$ and $|r_2|>1$;\\
Type III: $D=\begin{bmatrix}\pm 1&0\\0&\pm1\end{bmatrix}$;\\
Type IV: $D=\begin{bmatrix}r&1\\0&r\end{bmatrix}$ with $|r|>1$;\\
Type V: $D=\begin{bmatrix}1&1\\0&1\end{bmatrix}$ or $D=\begin{bmatrix}-1&1\\0&-1\end{bmatrix}$.

\medskip
We shall prove these cases one by one.
\subsection{Type I Case} WLOG we may assume that $D=\begin{bmatrix}1&0\\0&r\end{bmatrix}$ with $r>1$. Since $\k_1=D(1,0)^T=(1,0)^T=\l_1$, $ h(\x)\overline{h(\x-\k_1)}= h(\x-\l_1)\overline{h(\x-\l_1-\k_1)}$ by Lemma \ref{Lemma6}(iii). Notice that $S=[0,1)\times [0,1)$ tiles $\R^2$ by $\L$ and packs $\R^2$ by $\K$ so we can choose $\O=S$. Let $\delta>0$ be small enough so that $\delta<\min\{r-1,1/2\}$ and let $E_1=[0,1)\times [\delta,1)$, $E_2=[0,1)\times [0,\delta)$, $E_3=E_2+\k_1$,
$E_4=E_2+\l_2$, $E_5=E_3+\l_2$. One can verify that $E_1$ through $E_5$ satisfy condition (ii) of Lemma \ref{Lemma6}, hence $h(\x)\overline{h(\x-\k_1)}= h(\x-\l_2)\overline{h(\x-\l_2-\k_1)}$. In order to apply Lemma
\ref{Lemma5}, it remains for us to prove that $h(\x)\overline{h(\x-\k_2)}=h(\x-\l_1)\overline{h(\x-\l_1-\k_2)}$ and $h(\x)\overline{h(\x-\k_2)}=h(\x-\l_2)\overline{h(\x-\l_2-\k_2)}$.

\medskip
Let $q=\lfloor r\rfloor$ so that $r=q+r_0$ with $0\le r_0<1$. There are two possibilities for us to consider: (i) $0<r_0<1$ and (ii) $r_0=0$. In the latter case $r=q\ge 2$ is an integer and $\k_2\in \L$.

\noindent
(i) $0<r_0< 1$. Let $\delta>0$ be small enough so that $\delta<\min\{r_0,1-r_0\}$. Let $E_0=[0,1)\times [r_0,r_0+\delta)$, $E_2=[0,1)\times [0,\delta)$ and $E_1=[1,0)\times [0,1)-(E_0\cup E_2)$, then $E_0$, $E_1$, $E_2$ are disjoint and $E_0\cup E_1\cup E_2=[0,1)\times [0,1)$ tiles $\R^2$ by $\L$ and packs $\R^2$ by $\K$. Let
$E_3=E_2+\k_2$,
$E_4=E_2+\l_2$ and
$E_5=E_3+\l_2$.
Since $E_3-q\l_2=E_0$ as one can easily check, $E_1$, $E_2$, $E_3$ are disjoint and $E_1\cup E_2\cup E_3$ tiles $\R^2$ by $\L$ and $E_1\cup E_2\cup E_4$ packs $\R^2$ by $\K$. Thus the sets $E_1$ through $E_5$ satisfy condition (i) of Lemma \ref{Lemma6} for the pair $\{\l_2,\k_2\}$, hence $h(\x)\overline{h(\x+\k_2)}=h(\x+\l_2)\overline{h(\x+\l_2+\k_2)}$ for any $\x\in\R^2$, $h(\x)\overline{h(\x+\k_2)}=h(\x\pm \l_2)\overline{h(\x\pm\l_2+\k_2)}$ for any $\x\in\R^2$ by Lemma \ref{Lemma5}.

\begin{figure}[ht!]
\begin{center}
\includegraphics[height=2.5in,width=2.0in]{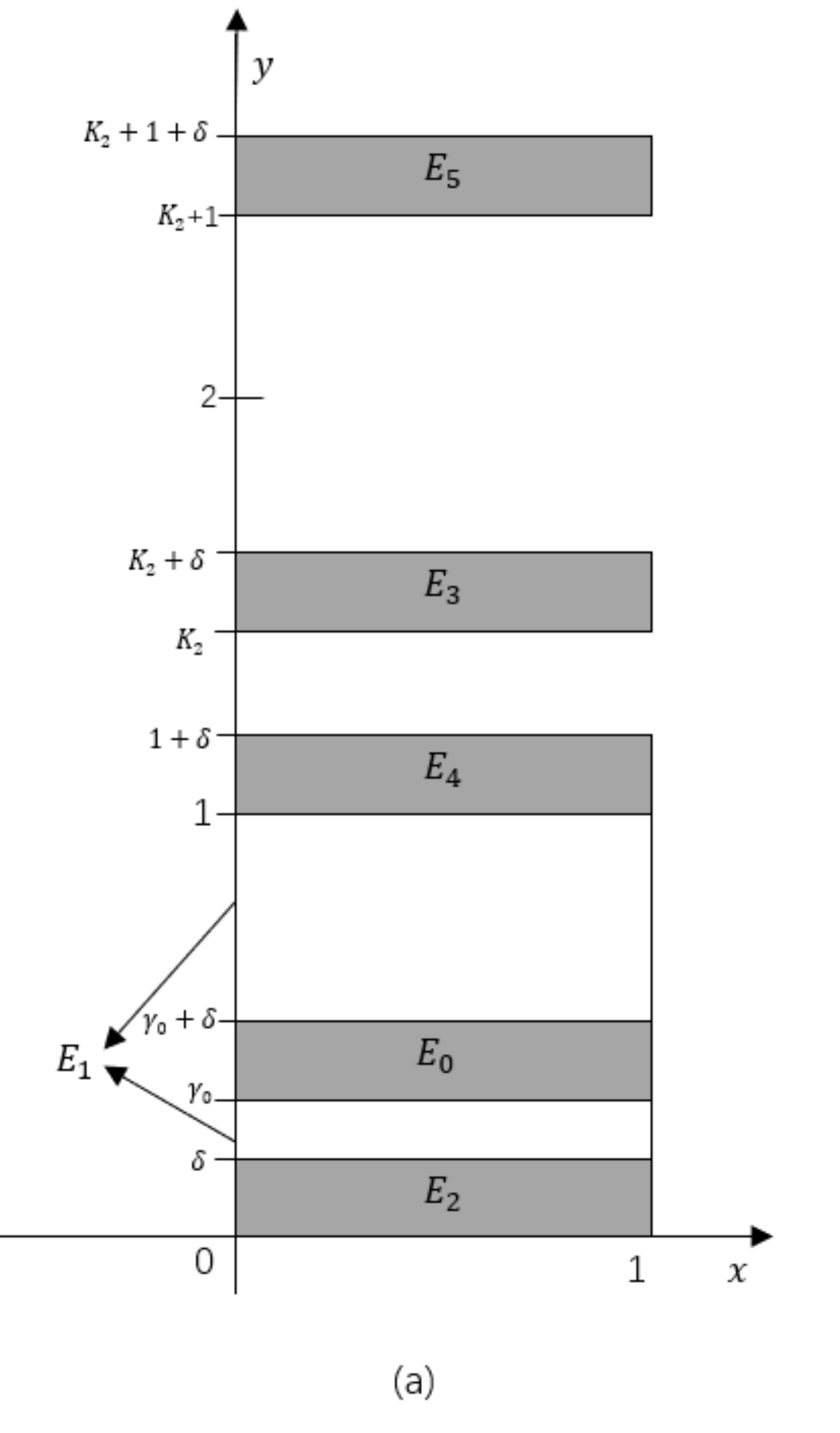}\hspace{0.1in}
\includegraphics[height=2.5in,width=2.8in]{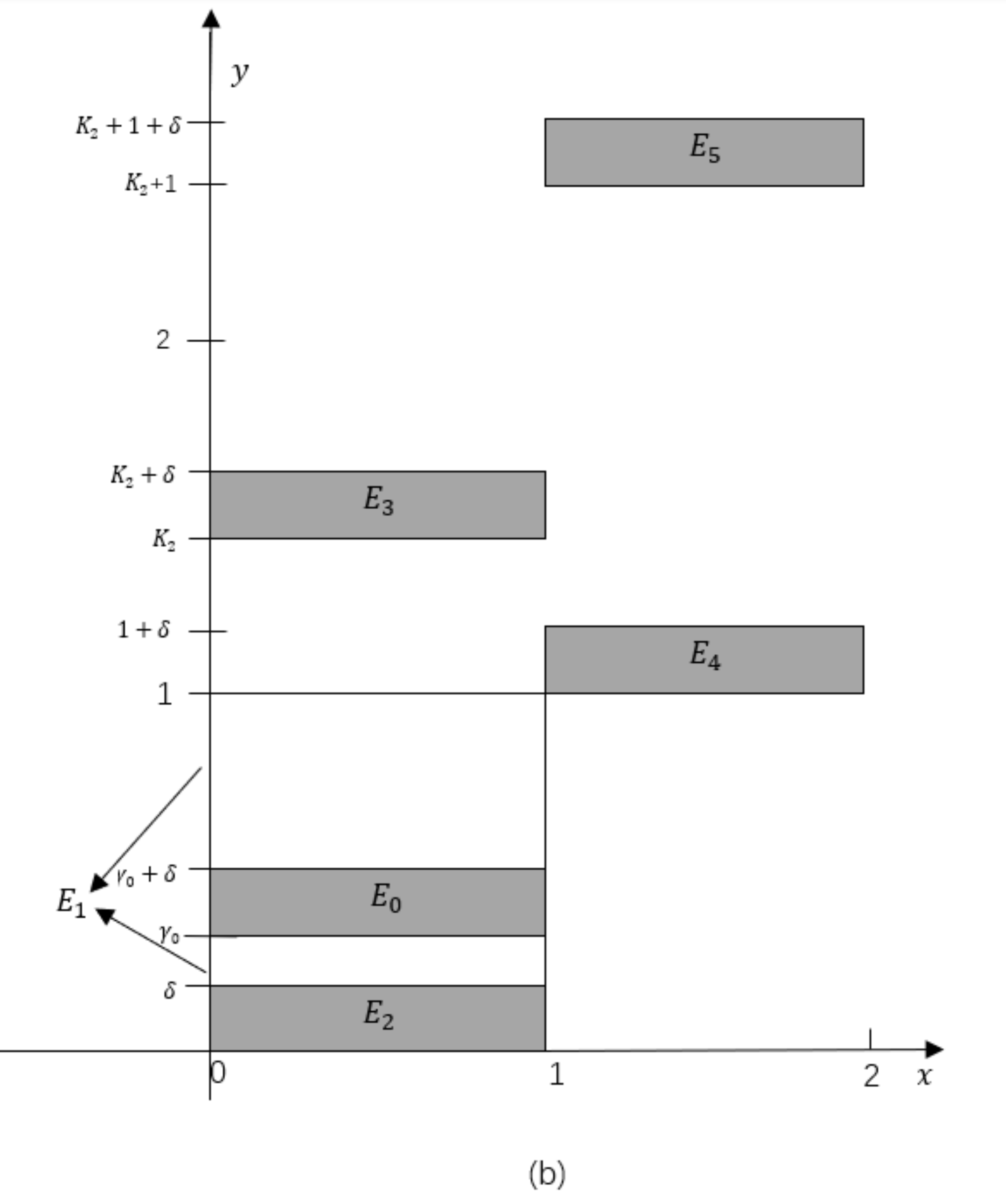}
\end{center}
\caption{The sets $E_1$ to $ E_5$ corresponding to the case $q=1$: (a) is for the pair $(\l_2,\k_2) $  and (b) is for the pair $(\l_1,\k_2)$.}
\label{1}
\end{figure}

On the other hand, if we define
$E_1$, $E_2$, $E_3$ exactly the same as in the above, and let $E_4=E_2+\l_1+\l_2$ and
$E_5=E_3+\l_1+\l_2=E_4+\k_2$, then $E_1\cup E_2\cup E_3$ tiles $\R^2$ by $\L$ and $E_1\cup E_2\cup E_4$ packs $\R^2$ by $\K$. By Lemma \ref{Lemma7}(i), we have
$h(\x)\overline{h(\x+\k_2)}=h(\x+\l_1)\overline{h(\x+\l_1+\k_2)}$ for any $\x\in \R^2$. See Figure \ref{1} for an illustration of these sets.

\medskip
\noindent
(ii) $r_0=0$ hence $r=q\ge 2$ is an integer and $\k_2\in \L$. It is easy to verify that the following sets satisfy condition (ii) of Lemma \ref{Lemma6} for the pair $\{\l_2,\k_2\}$: $E_1=\emptyset$, $E_2=[1,0)\times [0,1)$, $E_3=E_2+\k_2$, $E_4=E_2+\l_2$, $E_5=E_3+\l_2$. So we have that $h(\x)\overline{h(\x+\k_2)}=h(\x+\l_2)\overline{h(\x+\l_2+\k_2)}$ by Lemma \ref{Lemma6}.

On the other hand, let $E_1=\emptyset$, $E_2=[1,0)\times [0,1)$, $E_3=E_2+\k_2$, $E_4=E_2+\l_1+\l_2$, $E_5=E_3+\l_1+\l_2$. Then $E_1\cup E_2=E_2$ tiles $\R^2$ by $\L$ and $E_1\cup E_2\cup E_4=E_2\cup E_4$ packs $\R^2$ by $\K$. By Lemma \ref{Lemma7}(ii), we have
$h(\x)\overline{h(\x+\k_2)}=h(\x+\l_1)\overline{h(\x+\l_1+\k_2)}$ for any $\x\in \R^2$. 
\qed

\subsection{Type II Case} That is, $D=\begin{bmatrix}r_1&0\\0&r_2\end{bmatrix}$ where $|r_j|>1$. WLOG assume $r_1>1$ and $r_2>1$.
Let $r_1=q_1+r^\p_0$, $r_2=q_2+r^\pp_0$ with $q_1$, $q_2\in \Z$, $0\le r^\p_0<1$ and $0\le r^\pp_0<1$. There are four cases to consider: (a) $r^\p_0=r^\pp_0=0$; (b) $r^\p_0r^\pp_0>0$; (c) $r^\p_0=0$, $r^\pp_0>0$ and (d) $r^\p_0>0$, $r^\pp_0=0$.

\medskip\noindent
(a) Since $\k_1=q_1\l_1,\k_2=q_2\l_2\in \L$ with $q_1, q_2\ge 2$, the sets $E_1=\emptyset$, $E_2=[1,0)\times [0,1)$, $E_3=E_2+\k_j$, $E_4=E_2+\l_i$, $E_5=E_3+\l_i$ satisfy condition (ii) of Lemma \ref{Lemma6} for any pair of $i$, $j$ ($1\le i,j\le 2$). Thus the result follows from Lemmas \ref{Lemma5} and \ref{Lemma6}.

\medskip\noindent
(b) Let $\delta>0$ be small enough so that $\delta<\min\{r_0^\p,r_0^\pp,1-r_0^\p,1-r_0^\pp\}$, and let $E_0^\p= [r_0^\p,r^\p_0+\delta)\times [0,1)$ and $E_0^\pp=[0,1)\times [r_0^\pp,r_0^\pp+\delta)$.

\smallskip
\noindent
For the pair $\{\l_1,\k_1\}$: $E_2= [0,\delta)\times [0,1)$, $E_1=[1,0)\times [0,1)-(E^\p_0\cup E_2)$, $E_3=E_2+\k_1$, $E_4=
E_2+\l_1$, $E_5=E_3+\l_1$. Notice that $E^\p_0\cup E_1\cup E_2=[1,0)\times [0,1)$, $E_3-q_1\l_1=E^\p_0$ hence $E_1\cup E_2\cup E_3$ tiles $\R^2$ by $\L$ and
$E_1\cup E_2\cup E_4$ packs $\R^2$ by $\K$. So we have $h(\x)\overline{h(\x\pm\k_1)}=h(\x\pm\l_1)\overline{h(\x\pm\l_1\pm\k_1)}$ by Lemma \ref{Lemma6}.

\smallskip
\noindent
 For the pair $\{\l_2,\k_1\}$: $E_1$, $E_2$ and $E_3$ are the same as defined in the above and $E_4=E_2+\l_2+\l_1$, $E_5=E_3+\l_2+\l_1=E_4+\k_1.$ $E_1\cup E_2\cup E_3$ tiles $\R^2$ by $\L$ and $(E_1\cup E_2\cup E_4)$ packs $\R^2$ by $\K.$ By Lemma \ref{Lemma7}(i), $h(\x)\overline{h(\x+\k_1)}=h(\x+\l_2)\overline{h(\x+\l_2+\k_1)}$ for any $\x\in \R^2$.

\smallskip
\noindent
For the pair $\{\l_2,\k_2\}$:
$E_1$ through $E_5$ are defined similarly to their counterparts in the Type I Case with $r$ and $r_0$ replaced by $r_2$ and $r_0^\pp$.
That is, let $\delta>0$ be small enough so that $\delta<min\{r_0^\pp, 1-r_0^\pp\}$ and
let $E_0=[0,1)\times [r_0^\pp,r_0^\pp+\delta),\,E_2=[0,1)\times [0,\delta),$
and $E_1=[0,1)\times [0,1)-(E_0\cup E_2).$ Then $E_0,E_1,E_2$ are disjoint and $E_0\cup E_1\cup E_2=[0,1)\times [0,1)$ tiles $\R^2$ by
$\L$ and packs $\R^2$ by $\K.$ Let $E_3=E_2+\k_2,\,E_4=E_2+\l_2,$ and $E_5=E_3+\l_2.$ Since $E_3-q_2\l_2=E_0$,
$E_1,E_2,E_3$ are disjoint and $E_1\cup E_2\cup E_3$ tiles $\R^2$ by
$\L$ and $E_1\cup E_2\cup E_4$ packs $\R^2$ by $\K.$ So we have that $h(\x)\overline{h(\x+\k_2)}=h(\x\pm\l_2)\overline{h(\x\pm\l_2+\k_2)}$ by Lemma
\ref{Lemma6}(i) and Lemma \ref{Lemma5}.

\smallskip
\noindent
For the pair $\{\l_1,\k_2\}$: let $E_1$, $E_2$, $E_3$ be exactly the same as the above, and $E_4=E_2+\l_1+\l_2$, $E_5=E_3+\l_1+\l_2=E_4+\k_2.$
Then $(E_1\cup E_2\cup E_3)$ tiles $\R^2$ by $\L$ and $(E_1\cup E_2\cup E_4)$ packs $\R^2$ by $\K.$ The result now follows from Lemma \ref{Lemma7}(i).

\medskip\noindent
(c) We have $\k_1\in \L$. Let $\delta>0$ be small enough such that $\delta<\min\{r_0^\pp,1-r_0^\pp\}$.

\smallskip
 For the pair $\{\l_1,\k_1\}$: Let $E_1=\emptyset,\,E_2=[0,1)\times [0,1),\,E_3=E_2+\k_1,\,E_4=E_2+\l_1,\,E_5=E_3+\l_1.$
 So $E_1\cup E_2$ tiles $\R^2$ by $\L,$ and $E_1\cup E_2\cup E_4$ packs $\R^2$ by $\K.$
 Thus $h(\x)\overline{h(\x+\k_1)}=h(\x+\l_1)\overline{h(\x+\l_1+\k_1)}$ by (ii) of Lemma \ref{Lemma6}.

\smallskip
 For the pair  $\{\l_2,\k_1\}$: Let $E_1=\emptyset,\,E_2=[0,1)\times [0,1),\,E_3=E_2+\k_1,\,E_4=E_2+\l_2+\l_1,\,E_5=E_3+\l_2+\l_1=E_4+\k_1.$ So $E_1\cup E_2$ tiles $\R^2$ by $\L,$ and $E_1\cup E_2\cup E_4$ packs $\R^2$ by $\K.$ Thus $h(\x)\overline{h(\x+\k_1)}=h(\x+\l_2)\overline{h(\x+\l_2+\k_1)}$ by (ii) of Lemma \ref{Lemma7} and the case for the pair $\{\l_1,\k_1\}$ above. 
 
\smallskip
 For the pair $\{\l_1,\k_2\}$: Let $\delta>0$ be small enough so that
 $\delta<min\{r_0^\pp,\,1-r_0^\pp\}$, and let
 $E_0^\pp=[0,1)\times[r_0^\pp,r_0^\pp+\delta),\,E_2=[0,1)\times[0,\delta),\,
 E_1=[0,1)\times[0,1)-(E_0^\pp\cup E_2),\,E_3=E_2+\k_2,\,E_4=E_2+\l_1,\,E_5=E_3+\l_1.$
Then $E_1\cup E_0^\pp\cup E_2=[0,1)\times[0,1)$ tiles $\R^2$ by $\L$ and packs $\R^2$ by $\K$.
$E_3-q_2\l_2=E_0^\pp,$ so $E_1\cup E_2\cup E_3$ tiles $\R^2$ by $\L,$ and $(E_1\cup E_2\cup E_4)$ packs $\R^2$ by $\K.$
 Therefore  the sets $E_1$ to $E_5$ satisfy condition (i) of Lemma \ref{Lemma6}, and it follows that
 $h(\x)\overline{h(\x+\k_2)}=h(\x+\l_1)\overline{h(\x+\l_1+\k_2)}$.

\smallskip
Finally for the pair  $\{\l_2,\k_2\}$,  let
 $E_0^\pp=[0,1)\times[r_0^\pp,r_0^\pp+\delta),\,E_2=[0,1)\times[0,\delta),\,
 E_1=[0,1)\times[0,1)-(E_0^\pp\cup E_2),\,E_3=E_2+\k_2,\,E_4=E_2+\l_2,\,E_5=E_3+\l_2.$
It is easy to see that $E_1\cup E_2\cup E_3$ tiles $\R^2$ by $\L,$ and $(E_1\cup E_2\cup E_4)$ packs $\R^2$ by $\K.$ The result follows from Lemma \ref{Lemma6}(i). See Figure \ref{4}.

\medskip
\begin{figure}[ht!]
\begin{center}
\includegraphics[height=2.3in,width=2.4in]{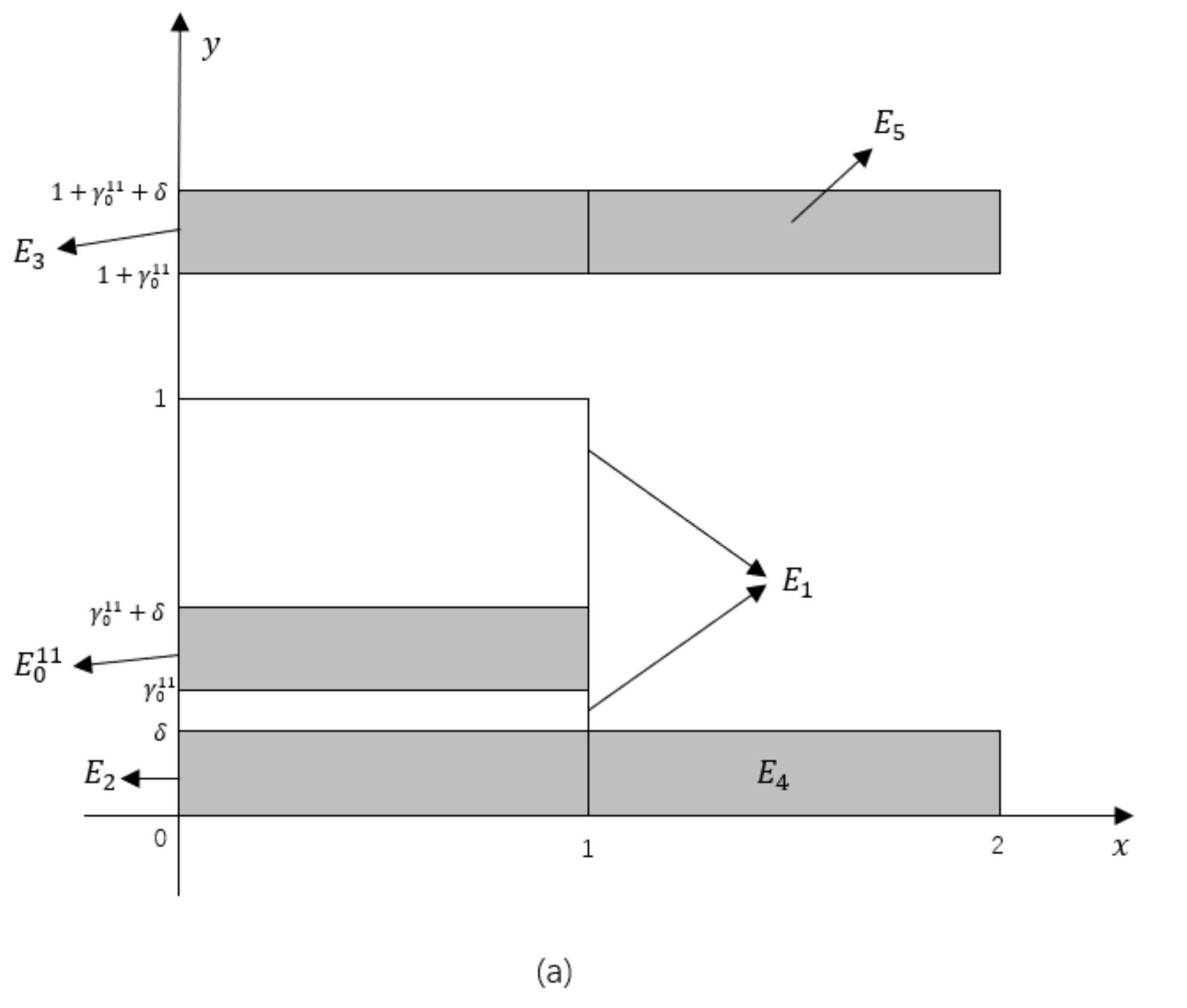}\hspace{0.1in}
\includegraphics[height=2.3in,width=1.6in]{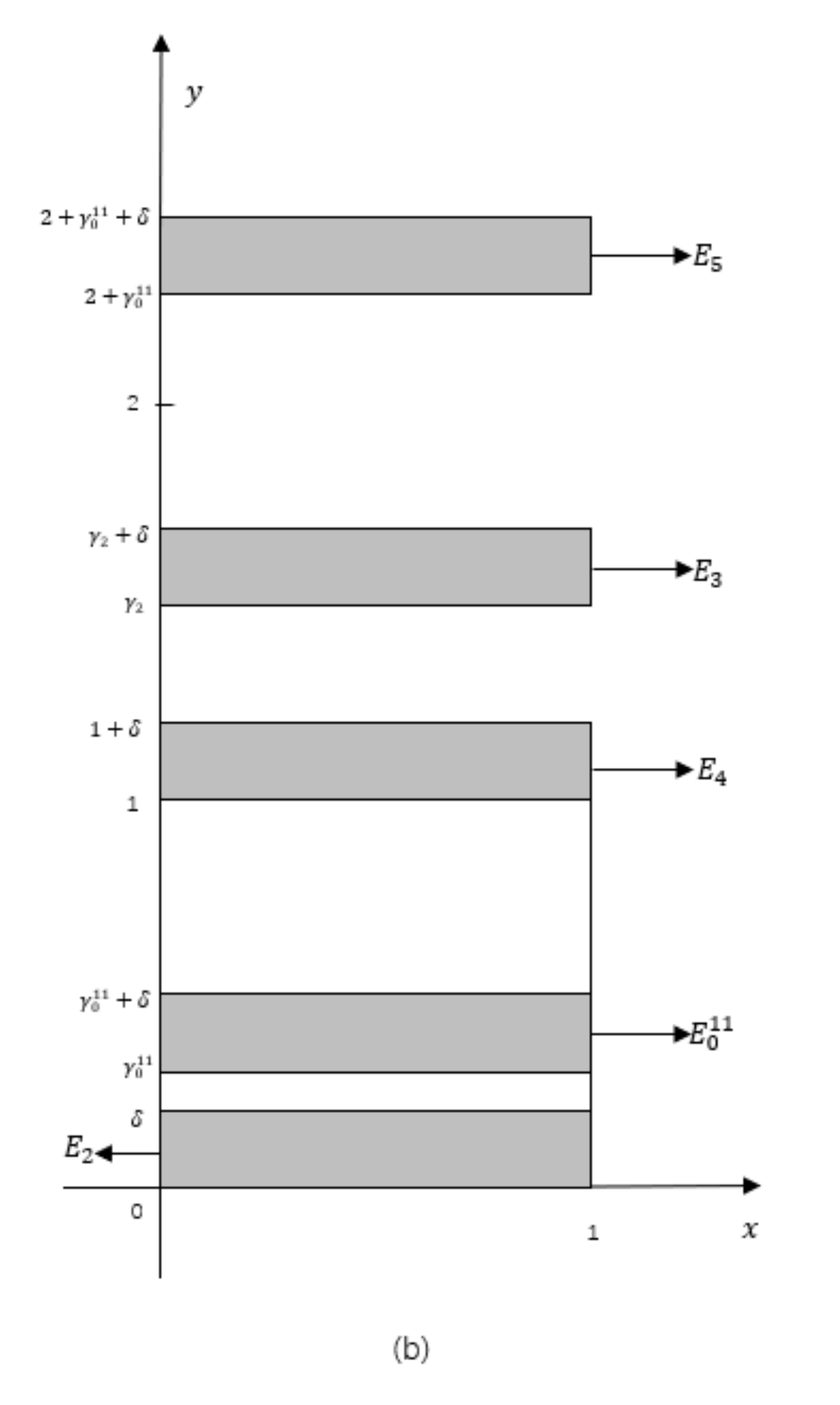}\hspace{0.2in}
\end{center}
\caption{The sets $E_1$ to $ E_5$ corresponding to $q_2=1$: (a) is for the pair $(\l_1,\k_2) $ and (b) is for the pair $(\l_2,\k_2)$.}
\label{4}
\end{figure}

\medskip\noindent
(d) We have $\k_2\in \L$. Let $\delta>0$ be small enough such that $\delta<\min\{r_0^\p,1-r_0^\p\}$. For the pairs $\{\l_1,\k_1\}$ and $\{\l_2,\k_1\}$, the sets $E_1$ to $E_5$ defined the same way as as their counterparts in the case of (b) satisfy condition (i) of Lemma \ref{Lemma6}. See Figure \ref{5}.

\medskip
\begin{figure}[ht!]
\begin{center}
\includegraphics[scale=0.3]{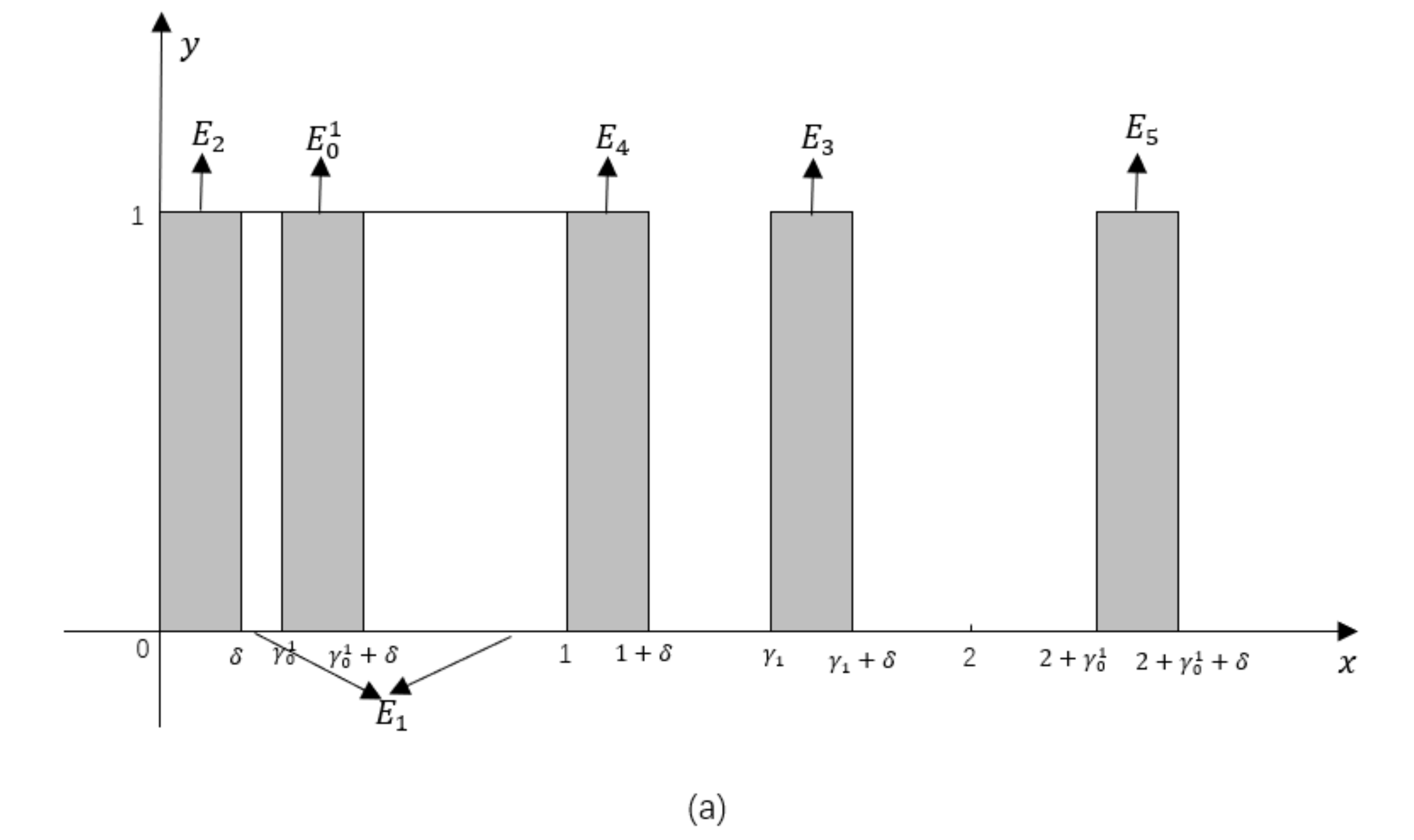}\hspace{0.1in}
\includegraphics[scale=0.3]{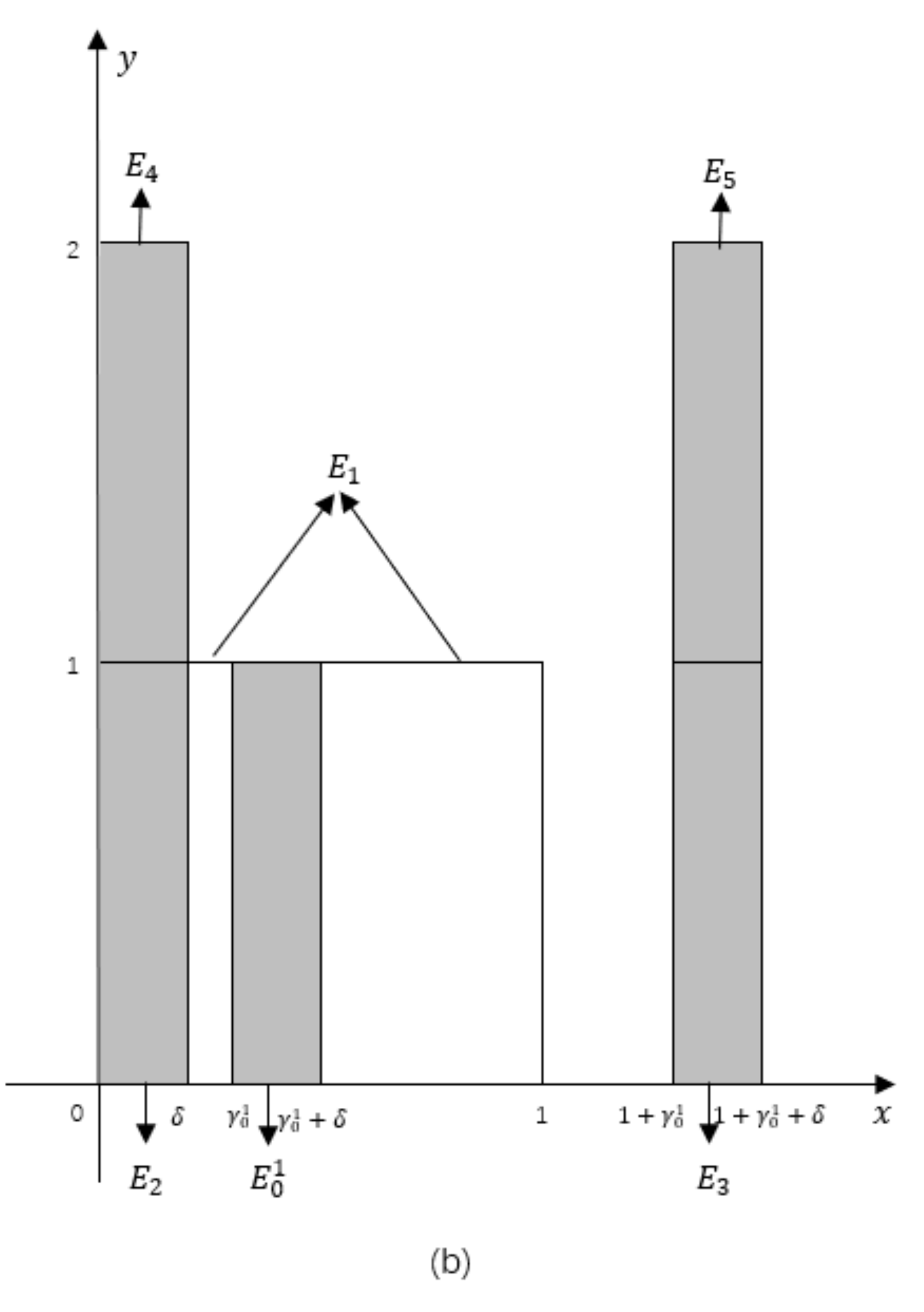}\hspace{0.2in}
\end{center}
\caption{The sets $E_1$ to $ E_5$ corresponding to $q_1=1$: (a) is for the pair $(\l_1,\k_1) $ and (b) is for the pair $(\l_2,\k_1)$.}
\label{5}
\end{figure}

\smallskip
For the pair $\{\l_2,\k_2\}$, the sets $E_1=\emptyset,\, E_2=[0,1)\times[0,1),\,E_3=E_2+\k_2,\, E_4=E_2+\l_2,\,E_5=E_3+\l_2=E_4+\k_2$ satisfy condition (ii) of Lemma \ref{Lemma6}, hence $h(\x)\overline{h(\x+\k_2)}=h(\x+\l_2)\overline{h(\x+\l_2+\k_2)}.$

 \smallskip
For the last pair $\{\l_1,\k_2\}$, the sets $E_1=\emptyset,\, E_2=[0,1)\times[0,1),\,E_3=E_2+\k_2,\, E_4=E_2+\l_1+\l_2,\,E_5=E_3+\l_1+\l_2=E_4+\k_2$ satisfy condition (ii) of Lemma \ref{Lemma7} as one can easily check. Combining it with the case for the pair $\{\l_2,\k_2\}$ above leads to  $h(\x)\overline{h(\x+\k_2)}=h(\x+\l_1)\overline{h(\x+\l_1+\k_2)}$ for all $\x\in\R^2$.

 \medskip
\subsection{Type III Case} That is, $D=\begin{bmatrix}1&0\\0&1\end{bmatrix}$. Let $U_1$ and $U_2$ be the bilateral-shift unitary operator on $\ell^2(\Z\times \Z)$ acting on the first and the second coordinator respectively. That is, if $\{e_{(n,m)}\}$ is the standard orthonormal basis for
$\ell^2(\Z\times \Z)$ (that is, $e_{(n,m)}=\{a_{i,j}\}\in \ell^2(\Z\times \Z)$ with $a_{n,m}=1$ and $a_{i,j}=0$ if $\{i,j\}\not=\{n,m\}$), then $U_1e_{(n,m)}=e_{(n+1,m)}$ and $U_2e_{(n,m)}=e_{(n,m+1)}$. For any $\eta=\{\eta_{i,j}\}\in \ell^2(\Z\times \Z)$ (we shall write $\eta_{i,j}=\eta_\l$ for short with $\l=i\l_1+j\l_2$) such that $\{U_1^nU_2^m\eta: n,m\in \Z\}$ is an orthonormal basis of $\ell^2(\Z\times \Z)$, define $g$ so that $g(\x-\l)=\sqrt{d_0}\eta_{\l}$ for any $\x\in [0,1)\times [0,1)$. We have
$$
\sum_{\l\in\L}g(\x-\l)\overline{g(\x-\l-\k)}=d\sum_{\l\in\L}\eta_{\l}\overline{\eta_{\l+\k}}=d_0\delta_{\0,\k}
$$
since $\{U_1^nU_2^m\eta: n,m\in \Z\}$ is an orthonormal basis of $\ell^2(\Z\times \Z)$. Thus (\ref{cond12}) and (\ref{cond22}) hold for any $\x$ since $[0,1)\times [0,1)$ tiles $\R^2$ by $\L=\K$.
This proves that $g$ is a normalized tight Gabor frame function. Let $h$ be a functional multiplier, we have
\begin{eqnarray*}
\sum_{\l\in\L}(hg)(\x-\l)\overline{(hg)(\x-\l-\k)}=d_0\sum_{\l\in\L}h(\x-\l)\overline{h(\x-\l-\k)}\eta_{\l}\overline{\eta_{\l+\k}}=d_0\delta_{\0,\k}
\end{eqnarray*}
for any $\k\in \L=\K$. This is equivalent to the condition that $\{U_1^nU_2^mM_{\tilde{h}}\eta: n,m\in \Z\}$ is an orthonormal basis of $\ell^2(\Z\times \Z)$, where $\tilde{h}=\{h(\x-\l): \l\in \L\}\in \ell^2(\Z\times \Z)$ for each fixed $\x$. Thus $M_{\tilde{h}}$ is a wandering vector multiplier for the group
$\{U_1^nU_2^m: (n,m)\in \Z\times \Z\}$. Therefore by \cite{HL2001} Theorem 3.1 (i), there exist $\lambda\in \C$ and $\s\in [0,1)\times [0,1)$ that depend only on $\x$ such that $|\lambda|=1$ and $h(\x-\l)=\lambda e^{2\pi i\langle \l,\s\rangle}$ for all $\l\in \L$. From this it is easy to verify that $h(\x)\overline{h(\x-\k)}=h(\x-\l)\overline{h(\x-\l-\k)}$ for any $\l, \k\in \L=\K$.

\medskip
\subsection{Type IV Case} That is, $D=\begin{bmatrix}r&1\\0&r\end{bmatrix}$ with $|r|>1$. WLOG let us assume that $r>1$. Let $r=q+r_0$ with $q\in \Z$ and $0\le r_0<1$. There are two cases: (a) $r_0=0$ and (b) $r_0>0$.

\medskip
(a) $r_0=0$ hence $r=q\ge 2$. $E_1=\emptyset$, $E_2= [0,1)\times [0,1)$, then $E_2$ tiles $\R^2$ by $\L$ and packs $\R^2$ by $\K$. Thus for any pair $\{\l_i,\k_j\}$ ($1\le i,j\le 2$) we can simply define $E_3=E_2+\k_j$, $E_4=E_2+\l_i$ and $E_5=E_3+\l_i$. $E_1$ through $E_5$ satisfy condition (ii) of Lemma \ref{Lemma6}.

\medskip
(b) $r_0>0$. Let $\delta>0$ be small enough such that $\delta<\min\{r_0,1-r_0\}$. Notice that $[0,1)\times [0,1)$ tiles $\R^2$ by $\L$ and packs $\R^2$ by $\K$.
For $\{\l_1,\k_1\}$ and $\{\l_2,\k_1\}$, let $E_0=[r_0,r_0+\delta)\times [0,\delta)$, $E_2=[0,\delta)\times [0,\delta)$, $E_1=[0,1)\times [0,1)-(E_0\cup E_2)$,then $E_0\cup E_1\cup E_2=[0,1)\times [0,1)$, $\mu(E_2)>0$. Let $E_3=E_2+\k_1=[r,r+\delta)\times [0,\delta)$, $E_4=E_2+\l_i$ and $E_5=E_3+\l_i$, $i=1,2$. Then as before we have $E_3=E_0+q\l_1$ hence $E_1\cup E_2\cup E_3$ tiles $\R^2$ by $\L$ and $E_1\cup E_2\cup E_4$ packs $\R^2$ by $\K$. Thus $E_1$ through $E_5$ satisfy condition (i) of Lemma \ref{Lemma6}.
For $\{\l_1,\k_2\}$ and $\{\l_2,\k_2\}$, we let $E_2=[0,\delta)\times [0,\delta)$, $E_1=[0,1)\times [0,1)-E_2$, $E_3=E_2+\k_2$, $E_4=E_2+\l_i$, $E_5=E_3+\l_i$, $i=1,2$. $E_1$ through $E_5$ satisfy condition (ii) of Lemma \ref{Lemma6}.

\medskip
\subsection{Type V Case} That is, $D=\begin{bmatrix}1&1\\0&1\end{bmatrix}$ or $D=\begin{bmatrix}-1&1\\0&-1\end{bmatrix}$. WLOG assume that $D=\begin{bmatrix}1&1\\0&1\end{bmatrix}$. We have $\k_1=\l_1$ and $\k_2=\l_1+\l_2$ hence $\k_2^\p=\k_2-\k_1=\l_2$ and the result of Type III applies to $\k_1^\p=\k_1$ and $\k_2^\p=\k_2-\k_1$. The result follows by Remark \ref{Remark7}.

\section{The proof of the main result: Part 2}\label{Sec_Proof2}

We now prove Theorem \ref{Theorem1} for the case when one of the eigen values of $D$ has absolute value less than one. Since $|\det(D)|\ge 1$, this implies that the other eigen value has absolute value bigger than one, therefore $D$ must be a diagonal matrix. Without loss of generality we will assume that $D=\begin{bmatrix}r_1& 0\\
0 & r_2\end{bmatrix}$ with $0<r_1<1<r_2$. We will divide $D$ into the following types and consider these types one by one as we did in the last section.

\medskip
\noindent
Type VI: $r_1$ and $r_2$ are both irrational;\\
Type VII: $r_1$ is rational and $r_2$ is irrational; \\
Type VIII: $r_2$ is rational and $r_1$ is irrational; \\
Type IX: $r_1$ and $r_2$ are both rational.

\textbf{Type VI Case}: both $r_1$ and $r_2$ are irrational. In this case, $\M=\{\l+\k:\ \l\in \L, \k\in\K\}$ is a dense subset of $\R^2$ and Lemma \ref{newlemma1} applies. In each of the following cases, it is understood that the choice of the number $\delta>0$ is such that the sets $E_1$ to $E_5$ so constructed are all disjoint. This can be done when $\delta$ is small enough.

\medskip
For the pair $(\l_2,\k_1)$, subdivide $S$ and $R$ into disjoint unions of rectangles such that $E_2=[0,\delta)\times [0,\delta)$ is a common rectangle in both partitions, $E_2+\k_1\subset S$ is a rectangle in the partition of $S$ and $E_2+\l_2\subset R$ is the rectangle in the partition of $R$ corresponding to  $E_2+\k_1\subset S$ in Lemma \ref{newlemma1}. Thus we can choose $\O$ so that $E_5=E_2+\k_1+\l_2\subset \O$ since it is $\L$ equivalent to $E_2+\k_1$ and $\K$ equivalent to $E_2+\l_2$. It follows that the sets $E_1=\O\setminus\big(E_2\cup E_5)$, $E_3=E_2+\k_1$, $E_4=E_2+\l_2$ and $E_5=E_2+\k_1+\l_2$ satisfy condition (i) of Lemma \ref{Lemma6}.

\medskip
For the pair $(\l_1,\k_1)$, let $q^\p>0$ be the largest integer such that $q^\p r_1<1$. It follows that $q^\p r_1+r_1=(q^\p+1)r_1>1$ hence $0<1-q^\p r_1<r_1$. Choose $\delta>0$ small enough such that $\delta< 1-q^\p r_1$ and $\delta+1-q^\p r_1<r_1$.
Subdivide $S$ and $R$ into disjoint unions of rectangles such that $E_2=[0,\delta)\times [0,\delta)$ is a common rectangle in both partitions, $E_2+\k_1\subset S$ is in the partition of $S$ and $E_2+\l_1-q^\p \k_1\subset R$ is in the partition of $R$. By Lemma \ref{newlemma1}, we can construct $\O$ so that $E_2=[0,\delta)\times [0,\delta)\subset \O$ and $\O$ contains a subset $E_0$ that is $\L$ equivalent to $E_2+\k_1\subset S$ and $\K$ equivalent to $E_2+\l_1-q^\p \k_1\subset R$ (which is $\K$ equivalent to $E_2+\l_1$) Then the sets $E_2=[0,\delta)\times [0,\delta)$, $E_1=\O\setminus\big(E_2\cup E_0)$, $E_3=E_2+\k_1$, $E_4=E_2+\l_1$ and $E_5=E_2+\k_1+\l_1$ satisfy condition (i) of Lemma \ref{Lemma6}.

\medskip
For the pair $(\l_2,\k_2)$, subdivide $S$ and $R$ into disjoint unions of rectangles such that x$E_2=[0,\delta)\times [0,\delta)$ is a common rectangle in both partitions, $E_2+r_0\l_2\subset S$ is in the partition of $S$ and $E_2+\l_2\subset R$ is in the partition of $R$ (where $r_0=r_2-\lfloor r_2\rfloor$). By Lemma \ref{newlemma1}, we can construct $\O$ so that $E_2=[0,\delta)\times [0,\delta)\subset \O$ and $\O$ contains a subset $E_0$ that is $\L$ equivalent to $E_2+r_0\l_2$ and $\K$ equivalent to $E_2+\l_2$. Then the sets $E_1=\O\setminus(E_2\cup E_0)$, $E_2$, $E_3=E_2+\k_2$, $E_4=E_2+\l_2$ and $E_5=E_2+\k_2+\l_2$ satisfy condition (i) of Lemma \ref{Lemma6} since $E_3$ is $\L$ equivalent to $E_2+r_0\l_2$, hence to $E_0$.

\medskip
For the pair $(\l_1,\k_2)$, subdivide $S$ and $R^\p=([0,r_1)\times [0,\delta))\cup ([1,1+r_1)\times [\delta,r_2))$ into disjoint unions of rectangles such that $E_2=[0,\delta)\times [0,\delta)$ is a common rectangle in both partitions, $E_2+r_0\l_2\subset S$ is in the partition of $S$ and
 $E_2+\l_1+\l_2\subset R^\p$ is in the partition of $R^\p$. By Lemma \ref{newlemma1}, we can construct $\O$ so that $E_2=[0,\delta)\times [0,\delta)\subset \O$ and $\O$ contains a subset $E_0$ that is $\L$ equivalent to $E_2+r_0\l_2$ and $\K$ equivalent to $E_2+\l_1+\l_2$. Then the sets $E_1=\O\setminus(E_2\cup E_0)$, $E_2$, $E_3=E_2+\k_2$, $E_4=E_2+\l_1+\l_2$ and $E_5=E_2+\k_2+\l_1+\l_2$, together with the result for the pair $(\l_2,\k_2)$ above, satisfy condition (i) of Lemma \ref{Lemma7}.

\medskip
\textbf{Type VII Case}:  $r_1$ is rational and $r_2$ is irrational. 

The proof in this case is similar to that of the Type VI case, with slight modifications using Lemma  \ref{newlemma3} instead of Lemma \ref{newlemma1}. The only exception here is the case when $m_1=1$ for the pairs $(\l_1,\k_1)$ and $(\l_1,\k_2)$. Thus we shall only provide the proof for the pairs $(\l_1,\k_1)$ and $(\l_1,\k_2)$ when $r_1=m_1/n_1=1/n_1$ for some integer $n_1$, and leave the other cases for our reader to verify. 

\medskip
For the pair $(\l_1,\k_2)$,  choose $\delta>0$ small enough so that $\delta<\min\{1-r_0,r_2-1\}$ where $r_0=r_2-\lfloor r_2\rfloor$. Subdivide $S$ and $R^\p=([0,1/n_1)\times [0,\delta))\cup ([1,1+1/n_1)\times [\delta,r_2))$ into disjoint unions of rectangles such that $E_2=[0,1/n_1)\times [0,\delta)$ is a common rectangle in both partitions, $E_2+r_0\l_2\subset S$ is in the partition of $S$ and
 $E_2+\l_1+\l_2\subset R^\p$ is in the partition of $R^\p$. By Lemma \ref{newlemma3}, we can choose $\O$ so that $E_2=[0,1/n_1)\times [0,\delta)\subset \O$ and $\O$ contains a subset $E_0$ that is $\L$ equivalent to $E_2+r_0\l_2\subset S$ and $\K$ equivalent to $E_2+\l_1+\l_2$. Then the sets $E_1=\O\setminus(E_2\cup E_0)$, $E_2$, $E_3=E_2+\k_2$, $E_4=E_2+\l_1+\l_2$ and $E_5=E_2+\k_2+\l_1+\l_2$, together with the result for the pair $(\l_2,\k_2)$, satisfy condition (i) of Lemma \ref{Lemma7}.

\medskip
For the pair $(\l_1,\k_1)$, choose $\delta>0$ small enough so that $\delta<r_2-1$. Subdivide $S$ and $R^\pp=([0,r_1)\times [0,\delta))\cup ([r_1,2r_1)\times [\delta,r_2))$ into disjoint unions of rectangles such that $E_2=[0,1/n_1)\times [0,\delta)$ is a common rectangle in both partitions, $E_2+\k_1\subset S$ is in the partition of $S$ and $E_0=E_2+\k_1+\l_2\subset R^\pp$ is in the partition of $R^\pp$. By Lemma \ref{newlemma3}, we can choose  $\O$ so that $E_2\subset \O$ and $E_0\subset \O$ (since $E_0$ is $\L$ equivalent to $E_2+\k_1$ and $\K$ equivalent to itself). Notice that $E_4=E_2+\l_1+\l_2=E_0+(n_1-1)\k_1$ hence it is $\K$ equivalent to $E_0$. Thus the sets $E_1=\O\setminus\big(E_2\cup E_0)$, $E_3=E_2+\k_1$, $E_4=E_2+\l_1+\l_2$ and $E_5=E_2+\k_1+\l_1+\l_2$, together with the result for the pair $(\l_2,\k_1)$, satisfy condition (i) of Lemma \ref{Lemma7}.

\medskip
\textbf{Type VIII Case}: $r_1$ is irrational and $r_2$ is rational. 

\medskip
As in the Type VII case, the proof in this case is also similar to that of the Type VI case. The only exception here is for the pairs $(\l_2,\k_2)$ and $(\l_1,\k_2)$ when $r_2=m$ is an integer to which we will provide the proof while leaving the other cases to our reader as exercises. 

\medskip
Notice that if $r_2=m$ is an integer, then it is necessary that $m>1$. Furthermore, $mr_1>1$ hence $r_1>1/m$. Let $q^\p$ be the largest integer such that $q^\p r_1<1$. For the pair $(\l_2,\k_2)$, partition $S$ as $S=\cup_{0\le j\le m-1}C_j$ where $C_j=[j/m,(j+1)/m)\times [0,1)$, and divide $R=[0,r_1)\times [0,m)$ such that the partition contains the rectangles $C_j^\p=[r_1-1/m,r_1)\times [j,j+1)$ for $1\le j\le m-1$, as well as the rectangle $C^\p_0=C_0$ so that $C_0=[0,1/m)\times [0,1)$ is a common rectangle in both partitions. By Lemma \ref{newlemma3}, we can choose $\O$ so that $C_0\subset \O$ and for each $1\le j\le m-1$, $\O$ contains a measurable set $C_j^\pp$ that is $\L$ equivalent to $C_j$ (hence to $C_j+j\l_2$) and $\K$ equivalent to $C_j^\p$. Choose $\delta>0$ small enough so that $\delta<\min\{r_1-\frac{1}{m},\frac{1}{m},(1+q^\p )r_1-1\}$. Let $E_2=[0,\delta)\times [0,1)\subset \O$. Since $\O$ is $\K$ equivalent to $\cup_{0\le j\le m-1}C_j^\p\subset R$ which is disjoint from $E_2+\l_2\subset R$ (and $R$ packs $\R^2$ by $\K$), if we let $E_1=\O\setminus E_2$, $E_4=E_2+\l_2$, then $E_1\cup E_2\cup E_4$ packs $\R^2$ by $\K$. Thus $E_1$, $E_2$, $E_4$ so defined, together with $E_3=E_2+\k_2$ and $E_5=E_2+\l_2+\k_2$ satisfy condition (ii) of Lemma \ref{Lemma6}. 

\medskip
For the pair $(\l_1,\k_2)$, we need to consider the following two cases separately: either $1/m\le 1-q^\p r_1$ or $1/m> 1-q^\p r_1$. 

\medskip
In the first case where $1/m\le 1-q^\p r_1<r_1$, choose $\O$ so constructed as for the pair $(\l_2,\k_2)$ above, then the sets $E_2=[0,\delta)\times [0,1)$, $E_1=\O\setminus E_2$, $E_3=E_2+\k_2$, $E_4=E_2+\l_1$ and $E_5=E_2+\l_1+\k_2$ satisfy condition (ii) of Lemma \ref{Lemma6}: $E_4=E_2+\l_1$ is $\K$ equivalent to $E_2+(1-q^\p r_1)\l_1=[1-q^\p r_1,1-q^\p r_1+\delta)\times [0,1)\subset R$, but $\O$ is $\K$ equivalent to the subset $\cup_{0\le j\le m-1}C_j^\p$ of $R$ that is disjoint from it.

\medskip
For the second case where $1/m> 1-q^\p r_1$, choose $\delta>0$ small enough so that $\delta<\min\{1-q^\p r_1, \frac{1}{m}+q^\p r_1-1,r_1-\frac{1}{m}\}$. We will then divide $S$ so that the partition contains the rectangles $C_j=[j/m,(j+1)/m)\times [0,1)$ ($1\le j\le m-1$), as well as the rectangles $C_{01}=[0,1-q^\p r_1)\times [0,1)$ and $C_{02}=
 [1-q^\p r_1, 1-q^\p r_1+\delta)\times [0,1))$ and $C_{03}=
 [1-q^\p r_1+\delta, \delta+1/m)\times [0,1))$. On the other hand, we will divide $R$ so that the partition contains the rectangles $C_j^\p=[r_1-1/m,r_1)\times [j,j+1)$ for $1\le j\le m-1$, as well as the rectangles $C^\p_{01}=C_{01}$ and $C^\p_{02}=
 [0, \delta)\times [1,2))$ and $C^\p_{03}=C_{03}$. By Lemma  \ref{newlemma3}, we can choose $\O$ so that $C_{01}= C_{01}^\p\subset \O$, $C_{03}= C_{03}^\p\subset \O$ and that $\O=\big(\cup_{0\le j\le m-1}C^\pp_j\big)\cup C_{02}^\pp$ where $C_j^\pp$ is $\L$ equivalent to $C_j$ and $\K$ equivalent to $C_j^\p$ for each $j$, and $C_{02}^\pp$ is $\L$ equivalent to $C_{02}$ and $\K$ equivalent to $C_{02}^\p$. Let $E_2=[0,\delta)\times [0,1)$, $E_1=\O\setminus E_2$, $E_3=E_2+\k_2$, $E_4=E_2+\l_1$ and $E_5=E_2+\l_1+\k_2$. Notice that $E_4$ is $\K$ equivalent to $E_4-q^\p \k_1=E_2+(1-q^\p r_1)\l_1=C_{02}$, but by its construction, $\O$ is $\K$ equivalent to a subset of $R$ that is disjoint from $C_{02}$. Thus the sets $E_1$ to $E_5$ so defined  satisfy condition (ii) of Lemma \ref{Lemma6}. 

\medskip
\textbf{Type IX Case}: $r_1=m_1/n_1$ and $r_2=m_2/n_2$ are both rational, with $r_1r_2=(m_1m_2)/(n_1n_2)\ge 1$. In this case, by Lemma \ref{newlemma2},
There are four subcases to consider: IX(a) $m_1\not=1$ and $n_2\not=1$; IX(b) $m_1\not=1$, $n_2=1$; IX(c) $m_1=1$, $n_2\not=1$; IX(d) $m_1=1$ and $n_2=1$.

\medskip
IX(a) The proof is identical to the proof of the Type VI case with $E_2$ replaced by $[0,1/n_1)\times [0,1/n_2)$;

\medskip
IX(b) The proofs for the pairs $(\l_1,\k_1)$ and $(\l_2,\k_1)$ are identical to the proof of IX(a) above.  Partition $S$ as $S=\cup_{1\le j\le n_1}C_j$ with  $C_j=[\frac{j-1}{n_1},\frac{j}{n_1})\times [0,1)$ and $R$ as $R=\cup_{1\le i\le m_1, 1\le j\le m_2}([\frac{i-1}{n_1},\frac{i}{n_1})\times [j-1,j))$. Since $r_1r_2=(m_1m_2)/n_1>1$, we have $m_1m_2\ge n_1+1$.

For the pair $(\l_2,\k_2)$, choose $\O$ by not using $E_2+\l_2$ as any of the rectangles $C_j^\p$ in Lemma \ref{newlemma2} (this is possible since $m_1m_2\ge n_1+1$). Thus $\O$ is $\K$ equivalent to a subset of $R$ that is disjoint from $E_2+\l_2$, and the sets $E_1=\O\setminus E_2$, $E_2$, $E_3=E_2+\k_2$, $E_4=E_2+\l_2$ and $E_5=E_2+\l_2+\k_2$ satisfy condition (ii) of Lemma \ref{Lemma6}.

For the pair  $(\l_1,\k_2)$,  let $q_0^\p$ be the largest integer such that $\frac{q_0^\p m_1}{n_1}<1$. choose $\O$ so that it contains $E_2=[0,1/n_1)\times [0,1)$ and is $\K$ equivalent to a subset of $R$ that is disjoint from $E_2+\l_2+(1-\frac{q_0^\p m_1}{n_1})\l_1\subset R$. Then the sets $E_1=\O\setminus E_2$, $E_2$, $E_3=E_2+\k_2$, $E_4=E_2+\l_1+\l_2$ (which is $\K$ equivalent to  $E_2+\l_2+(1-\frac{q_0^\p m_1}{n_1})\l_1$) and $E_5=E_2+\l_1+\l_2+\k_2$, together with the result above about the pair $(\l_2,\k_2)$, satisfy condition (ii) of Lemma \ref{Lemma7}.

\medskip
IX(c) Partition $S$ as $S=\cup_{1\le i\le n_1, 1\le j\le n_2}([\frac{i-1}{n_1},\frac{i}{n_1})\times [\frac{j-1}{n_2},\frac{j}{n_2}))$ and $R$ as $R=\cup_{1\le j\le m_2}([0,\frac{1}{n_1})\times [\frac{j-1}{n_2},\frac{j}{n_2}))$. Since $r_1r_2=m_2/(n_1n_2)\ge 1$, we must have $m_2\ge n_1n_2+1$ since $m_2$ and $n_2>1$ are coprime.

\medskip
Choose $\O$ so that it contains $E_2=[0,1/n_1)\times [0,1/n_2)$ and is $\K$ equivalent to a subset of $R$ that is disjoint from $E_2+\l_2$ (which is in the partition of $R$). Again this is possible since $m_2\ge n_1n_2+1$ hence there are more rectangles in the partition of $R$ than in the partition of $S$ in the above. Then for the pair $(\l_2,\k_2)$, the sets $E_1=\O\setminus E_2$, $E_2$, $E_3=E_2+\k_2$, $E_4=E_2+\l_2$ and $E_5=E_2+\l_2+\k_2$ satisfy condition (ii) of Lemma \ref{Lemma6}. For the pair  $(\l_1,\k_2)$, the sets $E_1=\O\setminus E_2$, $E_2$, $E_3=E_2+\k_2$, $E_4=E_2+\l_1+\l_2$ (which is $\K$ equivalent to $E_2+\l_2$ since $\l_1=n_1\k_1\in \K$) and $E_5=E_2+\l_1+\l_2+\k_2$, together with the result above about the pair $(\l_2,\k_2)$, satisfy condition (ii) of Lemma \ref{Lemma7}. For the pair $(\l_2,\k_1)$, the sets $E_1=\O\setminus E_2$, $E_2$, $E_3=E_2+\k_1$, $E_4=E_2+\l_2$ and $E_5=E_2+\l_2+\k_1$ satisfy condition (ii) of Lemma \ref{Lemma6}. For the pair $(\l_1,\k_1)$, the sets $E_1=\O\setminus E_2$, $E_2$, $E_3=E_2+\k_1$, $E_4=E_2+\l_1+\l_2$ and $E_5=E_2+\l_1+\l_2+\k_1$, together with the result above about the pair $(\l_2,\k_1)$, satisfy condition (ii) of Lemma \ref{Lemma7}.

\medskip
IX(d) If $r_2=m_2\not=n_1$, then $m_2>n_1$ and the proof is identical to IX(c) (with $n_2=1$). Let us consider the last case $r_2=n_1=q\ge 2$ with $r_1=1/q$. In this case we will simply choose $\O=\cup_{1\le j\le q}([\frac{j-1}{q},\frac{j}{q})\times [j-1,j))=\cup_{1\le j\le q}([0,\frac{1}{q})\times [0,1)+(j-1)(\k_1+\l_2))$.

\medskip
For the pair $(\l_2,\k_1)$, the sets $E_2=[0,\frac{1}{q})\times [0,1)$, $E_1=\O\setminus \big(E_2\cup (E_2+\k_1+\l_2)\big)$,$E_3=E_2+\k_1$, $E_4=E_2+\l_2$ and $E_5=E_2+\k_1+\l_2$ satisfy condition (i) of Lemma \ref{Lemma6}.

\medskip
For the pair $(\l_1,\k_1)$, the sets $E_2=[0,\frac{1}{q})\times [0,1)$, $E_1=\O\setminus \big(E_2\cup (E_2+\k_1+\l_2)\big)$, $E_3=E_2+\k_1$, $E_4=E_2+\l_1+\l_2$ and $E_5=E_2+\l_1+\l_2+\k_1$, together with the result above about the pair $(\l_2,\k_1)$, satisfy condition (i) of Lemma \ref{Lemma7}.

\medskip
For the pair $(\l_2,\k_2)$, a new approach is needed. Let $E_1=[0,\frac{1}{q})\times [0,1)$, $E_2=E_1+\l_2$, $E_3=E_1+q\l_2=E_1+\k_2$, $E_4=E_1+(q+1)\l_2=E_1+\k_2+\l_2$, $E_1^\p=E_1+\k_1$, $E^\p_2=E^\p_1+\l_2$, $E^\p_3=E^\p_1+q\l_2=E^\p_1+\k_2$, $E^\p_4=E^\p_1+(q+1)\l_2=E^\p_1+\k_2+\l_2$.  Let $\O_0$ be the set $\Omega\setminus \big(E_1\cup E^\p_2\big)$.

\begin{remark}\label{remark1}
{\em
Since $E_1$ and $E^\p_2$ are subsets of $\O$, $E_2$ is both $\L$ and $\K$ disjoint from $\O_0$ since $E_2=E_1+\l_2=E^\p_2-\k_1$. Similarly, one can show that $E_3$, $E_4$, $E_1^\p$, $E_3^\p$ and $E_4^\p$ are all $\L$ and $\K$ disjoint from $\O_0$.
}
\end{remark}

Now define
$$
g(\x)=\frac{\sqrt{d_0}}{2}\big(\chi_{E_1}+\chi_{E_2}+\chi_{E_3}-\chi_{E_4}+\chi_{E^\p_1}+\chi_{E^\p_2}-\chi_{E^\p_3}+\chi_{E^\p_4}\big)+\sqrt{d_0}\chi_{\O_0}.
$$
If $\x\in \O_0$, then $g(\x-\l)=0$ for any $\l\not=\0$ and $g(\x-\k)=0$ for any $\k\not=\0$ by Remark \ref{remark1} above. Thus (\ref{cond12}) and (\ref{cond22}) hold trivially.

\medskip
Now consider the case $\x\in E_1$. Notice that $g(\x-\l)=0$ unless $\l=\0$, $-\l_2$, $-q\l_2$ and $-(q+1)\l_2$, so (\ref{cond12}) holds trivially. On the other hand, for $\l=\0$, $-\l_2$, $-q\l_2$ and $-(q+1)\l_2$ and any $\k\not=\0$, $g(\x-\l-\k)=0$ unless $\k=-\k_1$, $\pm\k_2$ or $-\k_1\pm\k_2$ so (\ref{cond22}) holds trivially unless $\k$ is one of these.

\smallskip
For $\k=-\k_1$:
\begin{eqnarray*}
&&\sum_{\l\in \L}g(\x-\l)\overline{g(\x-\l-\k)}\\
&=& g(\x)\overline{g(\x+\k_1)}+g(\x+\l_2)\overline{g(\x+\l_2+\k_1)}\\
&+&g(\x+q\l_2)\overline{g(\x+q\l_2+\k_1)}+g(\x+(q+1)\l_2)\overline{g(\x+(q+1)\l_2+\k_1)}\\
&=&
\frac{d_0}{4}+\frac{d_0}{4}-\frac{d_0}{4}-\frac{d_0}{4}=0.
\end{eqnarray*}

\smallskip
For $\k=-\k_2$:
\begin{eqnarray*}
&&\sum_{\l\in \L}g(\x-\l)\overline{g(\x-\l-\k)}\\
&=& g(\x)\overline{g(\x+\k_2)}+g(\x+\l_2)\overline{g(\x+\l_2+\k_2)}\\
&=&
\frac{d_0}{4}-\frac{d_0}{4}=0.
\end{eqnarray*}

\smallskip
For $\k=\k_2$:
\begin{eqnarray*}
&&\sum_{\l\in \L}g(\x-\l)\overline{g(\x-\l-\k)}\\
&=& g(\x+(q+1)\l_2)\overline{g(\x+(q+1)\l_2-\k_2)}+g(\x+q\l_2)\overline{g(\x+q\l_2-\k_2)}\\
&=& g(\x+(q+1)\l_2)\overline{g(\x+\l_2)}+g(\x+q\l_2)\overline{g(\x)}\\
&=&
\frac{d_0}{4}-\frac{d_0}{4}=0.
\end{eqnarray*}

\smallskip
For $\k=-\k_1+\k_2$:
\begin{eqnarray*}
&&\sum_{\l\in \L}g(\x-\l)\overline{g(\x-\l-\k)}\\
&=& g(\x+(q+1)\l_2)\overline{g(\x+\k_1+(q+1)\l_2-\k_2)}+g(\x+q\l_2)\overline{g(\x+\k_1+q\l_2-\k_2)}\\
&=& g(\x+(q+1)\l_2)\overline{g(\x+\k_1+\l_2)}+g(\x+q\l_2)\overline{g(\x+\k_1)}\\
&=&
-\frac{d_0}{4}+\frac{d_0}{4}=0.
\end{eqnarray*}

\smallskip
For $\k=-\k_1-\k_2$:
\begin{eqnarray*}
&&\sum_{\l\in \L}g(\x-\l)\overline{g(\x-\l-\k)}\\
&=& g(\x)\overline{g(\x+\k_1+\k_2)}+g(\x+\l_2)\overline{g(\x+\l_2+\k_1+\k_2)}\\
&=&
-\frac{d_0}{4}+\frac{d_0}{4}=0.
\end{eqnarray*}

Thus (\ref{cond22}) holds in general for any $\x\in E_1$. The case of $\x\in E_2$ can be shown in the same way with $-\k_1$ in the above proof replaced by $\k_1$. So $g(\x)$ so defined is a normalized tight Gabor frame function. Let $h(\x)$ be a functional Gabor frame multiplier. Then for $\x\in E_1$ and $\k=-\k_2$, we have
\begin{eqnarray*}
&&\sum_{\l\in \L}(hg)(\x-\l)\overline{(hg)(\x-\l-\k)}\\
&=& h(\x)g(\x)\overline{h(\x+\k_2)g(\x+\k_2)}+h(\x+\l_2)g(\x+\l_2)\overline{h(\x+\l_2+\k_2)g(\x+\l_2+\k_2)}\\
&=&
\frac{d_0}{4}\big(h(\x)\overline{h(\x+\k_2)}-h(\x+\l_2)\overline{h(\x+\l_2+\k_2)}\big)=0.
\end{eqnarray*}
It follows that
$h(\x)\overline{h(\x+\k_2)}=h(\x+\l_2)\overline{h(\x+\l_2+\k_2)}$ and this proves the case for the pair $(\l_2,\k_2)$.

\medskip
Finally, for the pair $(\l_1,\k_2)$, define the sets $E_1$ to $E_4$, $E_1^\p$ to $E_4^\p$ so that $E_1$, $E_3$, $E_1^\p$, $E_3^\p$ are as in the case of the pair $(\l_2,\k_2)$ above, but the other four equal to their corresponding counterparts plus $\l_1$. Define $\O_0=\O\setminus(E_2\cup (E_2+\k_1+\l_2))$ (the same as in the above case). Since $E_2$, $E_4$, $E_2^\p$ and $E_4^\p$ are both $\L$ and $\K$ equivalent to their counterparts as defined in the above case, Remark \ref{remark1} holds for these sets as well. Now the proof for the last case can be copied over with only some minor modifications, and we will leave the details to our reader. This results in $h(\x)\overline{h(\x+\k_2)}=h(\x+\l_1+\l_2)\overline{h(\x+\l_1+\l_2+\k_2)}$, which leads to $h(\x)\overline{h(\x+\k_2)}=h(\x+\l_1)\overline{h(\x+\l_1+\k_2)}$ by the result above for the pair $(\l_2,\k_2)$. This completes the proof for the Type IX case.

\section{The proof of the main result: Part 3}\label{Proof3}

We now give the last part of the proof of Theorem \ref{Theorem1}, that is, the eigen values of $D$ are not real. Without loss of generality we will assume that $D=\begin{bmatrix}a& b\\
-b & a\end{bmatrix}$ with $a^2+b^2\ge 1$ and $b>0$. If $a=0$, then $b\ge 1$ and the statement of Theorem \ref{Theorem1} holds by Remark \ref{Remark7} and the results of the Type II and III cases in Section \ref{Sec_Proof}. Thus, we will only consider the case $a\not=0$. Furthermore, we have the following result as a consequence of the proof of \cite{HW2001} Theorem 1.2, although not in an obvious way (so we will provide a proof).

\begin{lemma}\label{Omega5}
If $a$ and $b$ are not both rational, then the set $\M=\{\l+\k:\ \l\in \L, \k \in \K\}$ is dense in $\R^2$.
\end{lemma}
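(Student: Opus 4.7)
The plan is to recast the density of $\M$ as a Kronecker-type statement on the torus $\mathbb{T}^2 = \R^2/\Z^2$ and then finish by a short character-theoretic (Pontryagin duality) argument.

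First I would note that $\L=\Z^2\subset\M$, so $\M$ is dense in $\R^2$ if and only if its image in $\mathbb{T}^2=\R^2/\Z^2$ is dense. That image coincides with the subgroup $H\subset\mathbb{T}^2$ generated by the reductions modulo $\Z^2$ of the two columns of $D$, namely $\k_1=(a,-b)^T$ and $\k_2=(b,a)^T$. So the problem reduces to showing that under the stated hypotheses $H$ is dense in $\mathbb{T}^2$.

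Next I would invoke Kronecker's theorem (equivalently, Pontryagin duality for $\mathbb{T}^2$): a subgroup of $\mathbb{T}^2$ is dense if and only if no non-trivial character of $\mathbb{T}^2$ annihilates it. Since the characters of $\mathbb{T}^2$ are exactly $(x,y)\mapsto e^{2\pi i(px+qy)}$ for $(p,q)\in\Z^2$, the subgroup $H$ fails to be dense if and only if there exists $(p,q)\in\Z^2\setminus\{(0,0)\}$ with
\begin{equation*}
pa-qb\in\Z\qquad\text{and}\qquad pb+qa\in\Z.
\end{equation*}
To finish, I would argue by contradiction from the existence of such a pair. Writing $pa-qb=m_1$ and $pb+qa=m_2$ with $m_1,m_2\in\Z$, the coefficient matrix $\begin{bmatrix}p&-q\\q&p\end{bmatrix}$ has determinant $p^2+q^2>0$, hence is invertible, and solving yields
\begin{equation*}
a=\frac{pm_1+qm_2}{p^2+q^2},\qquad b=\frac{pm_2-qm_1}{p^2+q^2}.
\end{equation*}
Both $a$ and $b$ are then rational, contradicting the assumption that they are not both rational. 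Therefore the annihilator of $H$ in $\Z^2$ is trivial, $H$ is dense in $\mathbb{T}^2$, and consequently $\M$ is dense in $\R^2$.

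The main obstacle here is really just conceptual rather than computational: once the density question is translated into the dual language, the verification is a one-line linear algebra computation. The one feature worth flagging is that the coefficient matrix appearing in the integer linear system is itself of the same conformal form as $D$, so its determinant is automatically $p^2+q^2$, a sum of two squares that vanishes only at $(p,q)=(0,0)$. This rigidity specific to the complex-eigenvalue case is exactly what forces the dichotomy between ``$a,b$ both rational'' and ``$\M$ dense.''
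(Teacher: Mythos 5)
Your proof is correct, and it reaches the same final computation as the paper by a somewhat different route. The paper argues by contradiction, citing the result of Han--Wang \cite{HW2001} that if $\M=\Z^2+D\Z^2$ fails to be dense then some unimodular $P\in M_2(\Z)$ makes the second row of $PD$ rational; writing $P=\begin{bmatrix}c_1&c_2\\c_3&c_4\end{bmatrix}$ this gives the system $c_3a-c_4b=r_1$, $c_3b+c_4a=r_2$ with $r_1,r_2\in\Q$, and inverting the conformal matrix $\begin{bmatrix}c_3&-c_4\\c_4&c_3\end{bmatrix}$ (determinant $c_3^2+c_4^2>0$) forces $a,b\in\Q$. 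You instead pass to $\mathbb{T}^2=\R^2/\Z^2$ and apply Kronecker's theorem / Pontryagin duality, so that non-density is witnessed by a nonzero $(p,q)\in\Z^2$ with $pa-qb\in\Z$ and $pb+qa\in\Z$, and then perform the identical inversion with $(p,q)$ in place of $(c_3,c_4)$. The two witnesses of non-density are essentially the same object (a nonzero integer covector annihilating $D\Z^2$ modulo $\Q^2$ resp.\ $\Z^2$), and the ``rigidity'' you flag --- that the coefficient matrix is again conformal with determinant a sum of two squares --- is exactly the mechanism the paper exploits. What your version buys is self-containedness: it replaces the appeal to the lattice-tiling machinery of \cite{HW2001} with the classical duality criterion for dense subgroups of the torus, at the cost of having to justify the (routine) equivalence between density of $\M$ in $\R^2$ and density of its image in $\mathbb{T}^2$, which is fine since $\Z^2\subseteq\M$.
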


\begin{proof}
If the statement is not true, that is, $a$ and $b$ are not both rational, and $\M$ is not dense in $\R^2$, then by \cite{HW2001}, there exists a unimodular matrix $P\in M_2(\Z)$ such that the entries in the second row of $PD$ are rational numbers. Let $P=\begin{bmatrix}c_1& c_2\\
c_3 & c_4\end{bmatrix}$ (with $c_j\in \Z$ and $c_1c_4-c_2c_3=1$). This implies that $c_3a-c_4b=r_1$ and $c_3b+c_4a=r_2$ with $r_1$, $r_2\in\Q$. It follows that $a=(c_3r_1+c_4r_2)/(c_3^2+c_4^2)$ and $b=(-c_4r_1+c_3r_2)/(c_3^2+c_4^2)$ are both rational, which is a contradiction.
\end{proof}

Thus there are only two types left for us to consider:

\noindent
Type X: $a$ and $b$ are both rational.\\
\noindent
Type XI: At least one of $a$, $b$ is irrational.


\medskip
\textbf{Type X Case}: $a$ and $b$ are both rational so we have $D=\frac{p}{q}D_1$, $D_1=\begin{bmatrix}a_1& b_1\\
-b_1 & a_2\end{bmatrix}$ for some positive integers $p$, $q$, $a_1$, $b_1$ such that $p$, $q$ are comprime and $a_1$, $b_1$ are either both one or are coprime.

\begin{lemma}\label{Omega6}
There exist unimodular matrices $P$, $Q\in M_2(\Z)$ such that $D^\p=PD_1Q=\begin{bmatrix}1& 0\\
0 & z\end{bmatrix}$ where $z=a_1^2+b_1^2\in \Z$. Furthermore, $P$ is a lower triangular matrix with its entries on the main diagonal both being 1.
\end{lemma}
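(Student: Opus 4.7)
The plan is to produce $P$ and $Q$ by hand using a single application of Bezout's identity, essentially performing a two step Smith-normal-form computation adapted to the particular shape of $D_1$.

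First I would note the critical consequence of the hypothesis: in all allowed cases we have $\gcd(a_1,b_1)=1$ (either both entries equal $1$, or they are coprime by assumption). Hence there exist integers $s,t\in\Z$ with $sa_1+tb_1=1$. I would then \emph{guess} the column-operation matrix
$$
Q=\begin{bmatrix} s & -b_1\\ t & a_1\end{bmatrix},
$$
noting that $\det Q=sa_1+tb_1=1$, so $Q\in GL_2(\Z)$ is unimodular. The motivation for this choice is that its second column is exactly $\begin{bmatrix}-b_1\\ a_1\end{bmatrix}$, which is perpendicular to the first row of $D_1$ and parallel to the second row scaled by $\|D_1\|$, so multiplying should zero out the $(1,2)$-entry and produce $z=a_1^2+b_1^2$ at the $(2,2)$-entry.

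Next I would simply compute $D_1Q$ directly. The top row gives $(a_1s+b_1t,\,-a_1b_1+a_1b_1)=(1,0)$, while the bottom row gives $(-b_1s+a_1t,\,b_1^2+a_1^2)=(-b_1s+a_1t,\,z)$. So
$$
D_1Q=\begin{bmatrix}1 & 0\\ -b_1s+a_1t & z\end{bmatrix}.
$$
The remaining task is to clear the $(2,1)$-entry using only a lower-triangular unimodular $P$ with $1$'s on its diagonal, i.e.\ $P=\begin{bmatrix}1&0\\ c&1\end{bmatrix}$. Multiplying on the left simply adds $c$ to the $(2,1)$-entry, so choosing $c=b_1s-a_1t$ yields
$$
PD_1Q=\begin{bmatrix}1&0\\ 0&z\end{bmatrix}
$$
as required. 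Since $P$ has this explicit form, it is automatically unimodular and lower triangular with $1$'s on its main diagonal.

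There is really no obstacle in this argument; the only subtle point is observing that the hypothesis on $(a_1,b_1)$ gives $\gcd(a_1,b_1)=1$, which is precisely what is needed to invoke Bezout. Everything else is a direct matrix computation with the explicitly constructed $P$ and $Q$.
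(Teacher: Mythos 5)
Your proposal is correct and uses essentially the same construction as the paper: the paper's proof takes exactly the matrices $Q=\begin{bmatrix}s& -b_1\\ t & a_1\end{bmatrix}$ and $P=\begin{bmatrix}1& 0\\ sb_1-ta_1 & 1\end{bmatrix}$ (splitting off $a_1=b_1=1$ as a separate explicit case, which your uniform Bezout argument subsumes with $s=1$, $t=0$). The only difference is that you carry out the verification the paper leaves to the reader.
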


\begin{proof}
If $a_1=b_1=1$, then $P=\begin{bmatrix}1& 0\\
1 & 1\end{bmatrix}$, $Q=\begin{bmatrix}1& -1\\
0 & 1\end{bmatrix}$ and $PD_1Q=\begin{bmatrix}1& 0\\
0 & 2\end{bmatrix}$. If $a_1\not=b_1$, then there exist integers $s$, $t$ such that $sa_1+tb_1=1$ since they are coprime. Let $P=\begin{bmatrix}1& 0\\
sb_1-ta_1 & 1\end{bmatrix}$, $Q=\begin{bmatrix}s& -b_1\\
t & a_1\end{bmatrix}$. We leave it to our reader to verify that $PD_1Q=\begin{bmatrix}1& 0\\
0 & a_1^2+b_1^2\end{bmatrix}$ as we claimed.
\end{proof}

Notice that the statement of Theorem \ref{Theorem1} holds for $\L^\p=\L=\Z^2$, $\K^\p=D^\p\Z^2$ with $D^\p=\frac{p}{q}\begin{bmatrix}1& 0\\
0 & a_1^2+b_1^2\end{bmatrix}=\begin{bmatrix}r_1& 0\\
0 & r_2\end{bmatrix}$ by our proof of the Type IX case in the last section. We also have the following Lemma.

\begin{lemma}\label{Omega7}
If the rectangles $C_j$, $C_j^\p$ in Lemma \ref{newlemma2} are replaced by the parallelograms $P^{-1}C_j$ and $P^{-1}C_j^\p$, then the statement of Lemma \ref{newlemma2} still holds.
\end{lemma}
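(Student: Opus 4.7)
The plan is to reduce Lemma \ref{Omega7} to Lemma \ref{newlemma2} by pulling back along the unimodular linear map $P^{-1}$, exploiting the fact that $P^{-1}$ preserves $\L=\Z^2$ setwise and converts the diagonal lattice $\K^\p=D^\p\L$ into the lattice $\K=D\L$ of interest in the Type X case. The result will then be a transport-of-structure argument along a measure-preserving $\Z$-linear bijection.

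First I would establish the identity $\K=P^{-1}\K^\p$. From Lemma \ref{Omega6} we have $PD_1Q=\begin{bmatrix}1&0\\0&z\end{bmatrix}$, so multiplying through by $p/q$ gives $P\cdot D\cdot Q=D^\p$, whence $D=P^{-1}D^\p Q^{-1}$. Since $Q\in M_2(\Z)$ is unimodular we have $Q^{-1}\L=\L$, so $\K=D\L=P^{-1}D^\p Q^{-1}\L=P^{-1}D^\p\L=P^{-1}\K^\p$. Similarly $P^{-1}\L=\L$ and $|\det P^{-1}|=1$. From this it follows that, for any measurable sets $E,F\subset\R^2$: $E$ tiles $\R^2$ by $\L$ iff $P^{-1}E$ does; $E$ packs $\R^2$ by $\K^\p$ iff $P^{-1}E$ packs $\R^2$ by $\K$; and $E$ is $\L$-equivalent (resp.\ $\K^\p$-equivalent) to $F$ iff $P^{-1}E$ is $\L$-equivalent (resp.\ $\K$-equivalent) to $P^{-1}F$.

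With this dictionary the proof is essentially automatic. Given parallelograms $P^{-1}C_j$ and $P^{-1}C_j^\p$ whose unions tile $\R^2$ by $\L$ and pack $\R^2$ by $\K$ respectively, the original rectangles $C_j,C_j^\p$ themselves satisfy the hypotheses of Lemma \ref{newlemma2} applied to $\L$ and $\K^\p$ (the hypotheses on the shape and dimensions of the rectangles $C_j,C_j^\p$ are untouched, and the tiling/packing conditions transfer through $P$ by the equivalences noted above). Lemma \ref{newlemma2} then produces measurable rectangles $C_j^\pp$ with $C_j^\pp$ being $\L$-equivalent to $C_j$ and $\K^\p$-equivalent to $C_j^\p$. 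Setting $\widetilde{C}_j\df P^{-1}C_j^\pp$, the parallelograms $\widetilde{C}_j$ are $\L$-equivalent to $P^{-1}C_j$ and $\K$-equivalent to $P^{-1}C_j^\p$; consequently $\O\df\cupdot_{j}\widetilde{C}_j$ tiles $\R^2$ by $\L$ and packs $\R^2$ by $\K$, which is the conclusion of Lemma \ref{newlemma2} transported to the parallelogram setting.

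I do not anticipate a genuine obstacle. The only piece of bookkeeping that requires care is the derivation of $\K=P^{-1}\K^\p$ from the triple factorization in Lemma \ref{Omega6} combined with the scaling $D=(p/q)D_1$; once that is in hand, everything else is simply the observation that the hypotheses and conclusions of Lemma \ref{newlemma2} are invariant under the change of variables $\x\mapsto P^{-1}\x$, so the statement holds verbatim with rectangles replaced by the parallelogram images.
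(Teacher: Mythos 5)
Your proposal is correct and takes essentially the same route as the paper: apply Lemma \ref{newlemma2} to the original rectangles with the lattices $\L$ and $\K^\p=D^\p\Z^2$, then transport the resulting sets by $P^{-1}$, using the unimodularity of $P$ and $Q$ together with the factorization $D^\p=PDQ$ (equivalently your identity $\K=P^{-1}\K^\p$) to convert $\L$- and $\K^\p$-equivalence into $\L$- and $\K$-equivalence. The paper phrases this as the single computation $P^{-1}\k^\p=P^{-1}(PDQ)\l^\pp=D(Q\l^\pp)\in\K$ applied to each triple $C_j,C_j^\p,C_j^\pp$, which is exactly your dictionary in disguise.
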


\begin{proof}
We only need to prove that for each triple $C_j$, $C_j^\p$ and $C_j^\pp$ as given in Lemma \ref{newlemma2}, we have $P^{-1}C_j=P^{-1}C_j^\pp+\l$ for some $\l\in \L$ and $P^{-1}C_j^\p=P^{-1}C_j^\pp+\k$ for some $\k\in \K$. By Lemma \ref{newlemma2}, we have $C_j=C_j^\pp+\l^\p$ for some $\l^\p\in \L$. It follows that $P^{-1}C_j=P^{-1}C_j^\pp+\l$ with $\l=P^{-1}\l^\p\in \L$. Similarly, there exists $\k^\p\in \K^\p=D^\p\Z^2$ (so $\k^\p=D^\p\l^\pp$ for some $\l^\pp\in \Z^2$) such that $C_j=C_j^\pp+\k^\p$. Multiplying both sides by $P^{-1}$, we have $P^{-1}C_j=P^{-1}C_j^\pp+P^{-1}\k^\p$,
and $P^{-1}\k^\p=P^{-1}D^\p\l^\pp=P^{-1}(PDQ)\l^\pp=D(Q\l^\pp)=\k\in \K$ since $Q\l^\pp\in \Z^2$.
\end{proof}

\medskip
\begin{remark}\label{remark5.1} {\em The lower triangular form of $P^{-1}$ means that when it is multiplied to a point in $\R^2$, it does not change the $\l_1$ coordinate of that point. Thus for a rectangle $C$ as described in Lemma \ref{newlemma2}, $P^{-1}C$ is a parallelogram with two vertical sides (parallel to $\l_2$) whose $\l_1$ coordinates are the same as the $\l_1$ coordinates of their corresponding vertical sides in $C$. In particular, $P^{-1}C$ does not change the the vertical side of $C$ if the side is on the $\l_2$ axis.}
\end{remark}

Let $r_1=p/q=m_1/n_1$ (so $m_1=p$ and $n_1=q$), $r_2=p(a_1^2+b_1^2)/q=m_2/n_2$ ($q=cn_2$ with $c=\rm{gcd}(a_1^2+b_1^2,q)$). There are two subcases to consider: 
X(a) $n_1\not=1$ and 
X(b) $n_1=1$.

\medskip
X(a) In this case it is obvious that $\k_1\not\in \L$ and $\k_2\not\in \L$. Let $E_2=P^{-1}\big([0,1/n_1)\times [0,1/n_2)\big)$. Since $P^{-1}S$ tiles $\R^2$ by $\L$, one of the $P^{-1}C_j$'s is $\L$ equivalent to $E_2+\k_1$ (which is not $\L$ equivalent to $E_2$ since $\k_1\not\in \L$). Thus by Lemma \ref{Omega7} we can 
choose  $\O$ so that it contains $E_2$, as well as a parallelogram $E_0$ that is $\L$ equivalent to $E_2+\k_1$ and $\K$ equivalent to $E_2+\l_2$ (which is in the partition of $P^{-1}R$ but not in the partition of $P^{-1}S$ by Remark \ref{remark5.1}). Thus for the pair $(\l_2,\k_1)$, the sets $E_1=\O\setminus\big(E_2\cup E_0)$, $E_3=E_2+\k_1$, $E_4=E_2+\l_2$ and $E_5=E_2+\k_1+\l_2$ satisfy condition (i) of Lemma \ref{Lemma6}. For the pair $(\l_1,\k_1)$, let $E_2=P^{-1}\big([0,1/n_1)\times [0,1/n_2)\big)$ be as defined before. Observe that since $P^{-1}R$ tiles $\R^2$ by $\K$, one of the $P^{-1}C^\p_j$'s in the partition of $P^{-1}R$ is $\K$ equivalent to $E_2+\l_1$. $E_2+\l_1$ cannot be $\K$ equivalent to $E_2$, or we would have $\l_1=j_1\k_1+j_2\k_2=(aj_1-bj_2)\l_1+(bj_1+aj_2)\l_2$ for some $j_1, j_2\in \Z$. It follows that $j_2=-bj_1/a$ and $aj_1-bj_2=aj_1+b^2j_1/a=(a^2+b^2)j_1/a=1$. So $j_1=a/(a^2+b^2)<1$, which is a contradiction. Thus by Lemma \ref{Omega7} we can 
choose  $\O$ so that it contains $E_2$ as well as a parallelogram $E_0$ that is $\L$ equivalent to $E_2+\k_1$ and $\K$ equivalent to $E_2+\l_1$, and the sets $E_1=\O\setminus\big(E_2\cup E_0)$, $E_3=E_2+\k_1$, $E_4=E_2+\l_1$ and $E_5=E_2+\k_1+\l_1$ satisfy condition (i) of Lemma \ref{Lemma6}. The cases for the pairs $(\l_1,\k_2)$ and $(\l_2,\k_2)$ are identical to the above discussions with $\k_1$ replaced by $\k_2$ hence are omitted.

\medskip
X(b) We have $n_1=q=1$ so $r_1=p=m_1$ and $r_2=pz$ with $z=(a_1^2+b_1^2)\ge 2$. In this case $a$ and $b$ are both integers. Similar to the X(a) case, $\l_j\not\in \K$ for $j=1,2$. Notice that in this case we can simply choose $\O$ to be $P^{-1}S$. Consider $E_1=\emptyset$ and $E_2=\O$. Since $\l_j\not\in \K$, $E_2$ is not $\K$ equivalent to $E_2+\l_j$ (by the same argument as in X(a) above), it follows that for any pair $(\l_j,\k_i)$ the sets $E_1=\emptyset$, $E_2=\O$, $E_3=E_2+\k_i$, $E_4=E_2+\l_j$ and $E_5=E_2+\k_i+\l_j$ satisfy condition (ii) of Lemma \ref{Lemma6}.

 \medskip
\textbf{Type XI case:} At least one of $a$, $b$ is irrational.

Let $R$ be the rectangle spanned by $\k_1=D\l_1$ and $\k_2=D\l_2$ (notice that in this case $\k_1$ and $\k_2$ are perpendicular), then $R$ tiles $\R^2$ by $\K$. Without loss of generality let us assume that $R$ lies in the first and fourth quadrants so that $R\cap S\not=\emptyset$ (otherwise we can perform a suitable flip of $S$ around either the $\l_1$ or the $\l_2$ axis). It is necessary that the interior of $R\cap S$ is not empty. By the given condition, there exists $\l^\p\in \L$, $\k^\p\in \K$ such that $\k_j+\l^\p\in S=[0,1)\times [0,1)$ and $\l_i+\k^\p\in R=DS$. It follows that there exists $\delta>0$ small enough such that (i) $E_2=[\delta,2\delta)\times [0,\delta)\subset S$; (ii) $E_2+\l_i+\k^\p\in R$; (iii) $E_2+\k_j+\l^\p\in S$ and (iv) either these three sets are disjoint, or the sets in (ii) and (iii) are identical and are disjoint from $E_2$, in this case it is necessary that $\k_j+\l^\p=\l_i+\k^\p$ (namely $\l^\p=\l_i$ and $\k_j=\k^\p$). By Lemma \ref{newlemma1}, we can then choose $\O$ so that it contains $E_2$ and a measurable set $E_0$ that is $\L$ equivalent to $E_2+\k_j+\l^\p$ and $\K$ equivalent to $
E_2+\l_i+\k^\p$. Since $E_0$ is $\L$ equivalent to $E_2+\k_j$ and $\K$ equivalent to $
E_2+\l_i$, it follows that the sets $E_1=\O\setminus (E_2\cup E_0)$, $E_2$, $E_3=E_2+\k_j$, $E_4=E_2+\l_i$ and $E_5=E_2+\l_i+\k_j$ satisfy condition (i) of Lemma \ref{Lemma6}. This concludes the proof of Theorem \ref{Theorem1}. \qed

\section{Generalizations and Ending Remarks}\label{Sec_Ending}

The same approach used in this paper can be applied to the higher dimensions. For example, Lemma \ref{Lemma2} in fact holds for any dimension $d\ge 2$, hence in order to prove Conjecture \ref{conj1}, it suffices to prove the conjecture with $\L=\Z^d$ and $\K=D\Z^d$ where $D$ is the real Jordan canonical form of $(B^TA)^{-1}$. It is thus quite plausible that Conjecture \ref{conj1} is true in general. For many special cases of $D$, the results and approaches used in this paper can be readily applied. For example $D=\begin{bmatrix}\pm 1& \cdots& 0\\
\vdots&&\vdots\\
0 &\cdots &\pm 1\end{bmatrix}$ (with the proof similar to that of Lemma \ref{Lemma5}(iii)) or $D=\begin{bmatrix}r_1& 0& \cdots& 0\\
0&r_2& \cdots& 0\\
\vdots&&\vdots\\
0 &\cdots &0&r_d\end{bmatrix}$ where the $r_j$'s are either irrational or non-integer rational numbers. Of course, there are many more cases to consider in the general case so the task is harder. It is our intention to tackle the general case in the near future.

\end{document}